%
%
\documentclass[11pt,reqno]{amsart}
\usepackage{amssymb,mathrsfs,graphicx,enumerate}
\usepackage{amsmath,amsfonts,amssymb,amscd,amsthm,bbm}
\usepackage{subcaption}
\usepackage{colortbl}
 \usepackage{kotex}
\usepackage{mathtools}

\graphicspath{{./figure/}}

\makeatletter
\@namedef{subjclassname@2020}{\textup{2020} Mathematics Subject Classification}
\makeatother

\topmargin-0.1in \textwidth6.in \textheight8.5in \oddsidemargin0in
\evensidemargin0in
\title[CS model with velocity control]{Asymptotic dynamics for the Cucker-Smale model with velocity control}

\author[Byeon]{Junhyeok Byeon}
\address[Junhyeok Byeon]{\newline Department of Mathematical Sciences\newline Seoul National University, Seoul 08826, Republic of Korea}
\email{giugi2486@snu.ac.kr}

\newtheorem{theorem}{Theorem}[section]
\newtheorem{lemma}{Lemma}[section]
\newtheorem{corollary}{Corollary}[section]
\newtheorem{proposition}{Proposition}[section]
\newtheorem{remark}{Remark}[section]
\newtheorem{example}{Example}[section]

\newtheorem{definition}{Definition}[section]

\newcommand{\bbr}{\mathbb R}

\usepackage{verbatim}

\begin{document}

\date{\today}

\subjclass[2020]{34D05 34D09 82C22} \keywords{Consensus model, clustering, emergence}


\begin{abstract}
We study the Cucker-Smale model with a velocity control function. The Cucker-Smale model design the emergence of consensus in terms of flocking. A proposed model encompasses several Cucker-Smale models, such as a speed limit model, a relativistic model, and an almost unit speed model. We provide collective behaviors of the proposed model, like mono or bi-cluster flocking, sticking, and collision avoidance, depending on the regularity and singularity of communication weight at the origin. In particular, we provide a sufficient framework to guarantee a positive lower bound of the distance between agents under strongly singular communications. 
\end{abstract}
\maketitle \centerline{\date}


	\section{Introduction} \label{sec:1}
\setcounter{equation}{0} 

Collective behaviors of complex systems are ubiquitous. In \cite{HPZ18}, to name a few, herding of sheep, schooling of fish, and synchronization of fireflies \cite{AB19,C-F-T-V,M-T1}, etc. Thanks to potential engineering applications for unmanned aerial vehicles (UAVs), robotics, and client network equipment, collective dynamics of many-body systems have been extensively investigated in engineering domains, such as control theory. Among others, the Cucker-Smale (in short, CS) model describes the dynamics of self-propelled particles' flocking behaviors, and it received lots of attention as it unites seemingly unrelated phenomena \cite{AB19,P-R-K,T-T,VZ}. For the mathematical analysis, kinetic description, and hydrodynamic description of the CS model, we refer to \cite{AB19,CFRT10,CHL17,HL09,HT08,M-T1}, \cite{CCTT16}, and \cite{K21}, respectively. 

Let $q_i$ be the position of the $i$-th agent (particle). In this paper, we are interested in the CS model with \textit{velocity control}. The following system of ODEs governs the agents of the proposed model:
\begin{align}\label{A-3'}
	\begin{cases}
		\dot{q}_i= G(p_i),\quad t>0, \quad i \in [N]:=\{1,2,\cdots,N\}, \vspace{.2cm}\\
		\displaystyle \dot{p}_i=\frac{\kappa}{N}\sum_{k=1}^N\psi(|q_k-q_i|)(G(p_k)-G(p_i)) \vspace{.2cm},\\
		(q_i,p_i) \big|_{t = 0+} = (q_i^0,p_i^0), \quad p_i,q_i \in \bbr^d,
	\end{cases}
\end{align}
where $\psi$ and $\kappa>0$ represent the communication (kernel) between the agents and its intensity, respectively. Above, $G:\bbr^d \to I \subset \bbr^d$ is a velocity control function, and $G(p_i)$ is interpreted as the velocity of an $i$-th particle. In particular, when $G$ is an identity function, the model \eqref{A-3'} reduces to the standard CS model. We assume that $G=g(|p|)p/|p|$, where $|p| \mapsto g(|p|)$ is a convex or concave function which is continuously differentiable and increasing with a non-vanishing gradient (for the detail, refer to \eqref{G}). In the model \eqref{A-3'}, the presence of $G$ may lead to a huge influence on the agent’s behavior. For example, if $G$ is a bounded function, then its boundedness is realized as a speed limit of agents. Other than the speed limitation, effects like the relativistic effect or almost unity of speed can be implemented. For more details, we refer to Example \ref{E2.1}.

One of the main purpose of this paper is to confirm whether the model \eqref{A-3'} exhibits consistent behavior with the standard CS model under a regular ($\lim_{x \searrow 0}\psi(x)<\infty$) or singular ($\lim_{x \searrow 0}\psi(x)=\infty$) communication despite velocity control. More precisely, we consider the following three kind of communications:
\begin{align*}
 \mbox{Type I}&:~~ \psi \in (L^\infty \cap C^{0,1})(\bbr_+;\bbr_+), \\
 &\hspace{0.5cm}(\psi(r)-\psi(s))(r-s) \leq 0, \quad \forall r,s \in \bbr_+, \\
 \mbox{Type II}&:~~\psi(q)=\frac{1}{|q|^\alpha}, \quad \alpha \in (0,1),\quad q \neq 0, \\
 \mbox{Type III}&:~~\psi(q)=\frac{1}{|q|^\alpha}, \quad \alpha \geq 1,\quad q \neq 0,
\end{align*}
where $\bbr_+$ is a set of positive real numbers, and $(L^\infty \cap C^{0,1})(\bbr_+;\bbr_+)$ is a space of bounded and Lipschitz continuous functions from $\bbr_+$ to $\bbr_+$, respectively. For any type of kernel, under appropriate initial conditions we can expect flocking. However, for $\psi$ of type II and III, the vector field associated with \eqref{A-3'} becomes unbounded at the instant that two particles collide. Therefore the existence and uniqueness of of a solution cannot be treated by the standard Cauchy-Lipschitz theory. In previous work \cite{P15,P14}, the authors proved the existence and uniqueness of weak solution in the Sobolev space $W^{2,1}$ for type II kernel under $\alpha \in (0,1/2)$ and $G=\mathrm{Id}$. In this case, two agents may \emph{stick} in finite time. On the other hand, the well-definedness of a solution for type III kernel is guaranteed by \emph{collision avoidance} induced by strong singularity ($\int_0^\varepsilon \psi(x)dx = \infty$) of communications \cite{CCMP17}. Such collision avoidance property is interesting in view of practical applications, and also desirable from a theoretical point of view (see Remark \ref{R4.2}). The second purpose of this paper is to present more improved results on the well-definedness and collision avoidance properties of solutions under singular communications.

Our main results of this paper are three-fold. First, for the coupling of type I, we investigate some equivalence conditions for the emergence of flocking. Let $(P,Q)$ be a solution to \eqref{A-3}, and define $D_P(t)$ and $D_Q(t)$ as maximum of $|p_i(t)-p_j(t)|$ and $|q_i(t)-q_j(t)|$, respectively. We say that $(P,Q)$ exhibits (mono-cluster) flocking if agents form a single group ($\sup_{t \geq 0} D_Q(t) < \infty$) and they align their velocity ($\lim_{t \to \infty} D_P(t) =0$). Then we have:\begin{align}\begin{aligned}\label{AX-1}
	(P,Q) \text{ exhibits flocking } ~ &\Leftrightarrow ~ D_P(t) \text{ decays exponentially } \\
	~ &\Leftrightarrow ~ D_Q(t) \text{ is bounded. }
\end{aligned}\end{align}
Based on the above equivalence, we provide a sufficient condition to achieve flocking in terms of initial data and system parameters:
\begin{align}\label{AX-2}
		\|P^0\| < \frac{\mathcal{M}\kappa}{M_{G'}}\int_{\|Q^0\|}^{+\infty}\psi(s)\,ds,
\end{align}
where $\|Q^0\|^2:=\sum_{i,j \in [N]}|q_i^0-q_j^0|^2$,  $\|P^0\|^2:=\sum_{i,j \in [N]}|p_i^0-p_j^0|^2$, and $\mathcal{M}$, $M_{G'}$ are positive constants characterized by initial data and velocity control function $G$. As an application, we also provide an equivalent condition for a bi-cluster flocking as well.

Our second result deal with the existence, uniqueness, and regularity of the solution for communication type II on the \emph{real line}. More precisely, when the ambient space of \eqref{A-3'} is $\bbr$ and the kernel is of the form $\psi(x)={|x|}^{-\alpha} ~ (\alpha \in (0,1))$, \eqref{A-3'} admits a unique global weak solution $(P,Q)$, where $q_i$ is in the Sobolev space $W^{2,\gamma}([0,T])$ for each $i \in [N]$, $T \in \bbr_+$, and
		\[
			\gamma \in \bigg[1, \frac{1}{\max\{ 1-K, \alpha\}}\bigg), \quad 
			K := C\left(\frac{1}{\alpha}-1\right),
		\] 
where $C$ is a positive constant depending on $G$ and $\alpha$. This states that a weak solution is well defined for any $\alpha \in (0,1)$ and its regularity improves as $\alpha$ decrease. In particular, $p_i$ is close to Lipchitz continuous function when $\alpha$ is close to $0$. Note that in this case, $(P,Q)$ always exhibits flocking unconditionally, and agents may stick in finite time.

Our third result deal with a collision avoidance property under the kernel of type III: if initial data is non-collisional, then there exists a unique global classical solution with the collision avoidance property:
	\[
		\inf_{t \in [0,T]}\min_{\substack{i,j \in [N] \\ i \neq j}}|q_i(t)-q_j(t)| > 0, \quad \forall T \in \bbr_+.
	\]
Furthermore, if we further assume that $\alpha \neq 1$ and flocking emerges (i.e. any of \eqref{AX-1} or \eqref{AX-2} is achieved), then a strictly positive lower bound of relative distance exists between agents:
	\[
		\inf_{t \geq 0}\min_{\substack{i,j \in [N] \\ i \neq j}}|q_i(t)-q_j(t)| \geq L_\infty > 0.
	\]
Therefore for arbitrary initial data and coupling intensity $\kappa > 0$, one can guarantee the existence of positive $L_\infty$ whenever $\alpha$ is sufficiently close to 1. To the authors' knowledge, the existence of $L_\infty$ under a strongly singular kernel is partially solved, and the above results may complement the previous result (see Remark \ref{R4.2}).

The rest of the paper is organized as follows. In Section \ref{sec:2}, we clarify the conditions of the velocity control function and provide several examples. We then study the emergence of flocking dynamics of \eqref{A-3'} for a regular kernel (type I). In Section \ref{sec:3}, we study the existence, uniqueness, and regularity of \eqref{A-3'} for a weakly singular kernel (type II) on the real line. In Section \ref{sec:4}, we review the collision avoidance property of \eqref{A-3'} for a strongly singular kernel (type III) and provide a sufficient condition to guarantee a strictly positive lower bound of the relative distance between the agents. Finally, Section \ref{sec:5} is devoted to a brief summary of our main results and some remaining problems.

\vspace{0.5cm}

\noindent {\bf Notation.} We use the following  handy notation throughout the paper:
\begin{align*}
&\psi_{ij}:=\psi(|q_i-q_j|), \quad [n]:=\{1,2,\cdots,n\} ~ \text{for} ~ n \in \mathbb{N}, \\
&\bbr_{\geq 0}:= \{ x \mid x \geq 0\}, \quad \bbr_+ := \{ x \mid x>0 \}.
\end{align*}
For configuration vectors $q_i \in \bbr^d$ and $p_i \in \bbr^d$, we denote
 \begin{align*}
 	&Q(t) := (q_1(t), \ldots, q_N(t)), \quad P(t) := (p_1(t), \ldots, p_N(t)), \\
	&Q^0 := Q(0), \quad P^0 := P(0), \quad \mathcal{N}:=(\nu_1,\nu_2,\cdots,\nu_N),
\end{align*}
where $\nu_i$ is to be defined in Section \ref{sec:3}. For $S \subset [N]$, we define the norms on a (sub)system of $\{p_i-p_j\}_{i,j \in [N]}$ and $\{q_i-q_j\}_{i,j \in [N]}$ as
\begin{align*}
	&\|Q\|_{S} := \sqrt{\sum_{i,j \in S}|q_i-q_j|^2}, \quad \|P\|_{S} := \sqrt{\sum_{i,j \in S}|p_i-p_j|^2}, \\
	& \|Q\|:=\|Q\|_{[N]} \quad \|P\|:=\|P\|_{[N]}, \\
	&D_{P,S} := \max_{i,j \in S} | p_i - p_j |, \quad
	D_{Q,S} := \max_{i,j \in S} | p_i - p_j |, \\
	&D_P := D_{P,[N]}, \quad D_Q := D_{Q,[N]}.
\end{align*}
Space of function from $X$ to $Y$ is written as $\mathcal{F}(X;Y)$. If $Y=\bbr$, we write $\mathcal{F}(X):=\mathcal{F}(X;Y)$. Quantity $\|f\|_{L^p(X)}$, the $L^p$ norm of $f$ on $X$, is defined as $(\int_X f(x) dx)^{1/p}$ for $p \in [1,\infty)$, and defined as an essential supremum of $f$ for $p=\infty$. If $L^p$ norm of $f$ on $X$ is finite, it is denoted by $f \in L^p(X)$. $C^{0,1}(X)$ is a space of Lipschitz continuous function on $X$. Sobolev space $W^{k,p}(X)$ for $X \subset \bbr$ is defined as a space of functions $f \in L^p(X)$ such that $f$ and its weak derivatives up to order $k$ have a finite $L^p$ norm on $X$. If $f \in \mathcal{F}(C)$ for any compact subset $C$ of $X$, we write $f \in \mathcal{F}_{\mathrm{loc}}(X)$.

\section{Analysis under regular communications}\label{sec:2}
\setcounter{equation}{0}
In this section, we study the basic properties of \eqref{A-3'} and present technical lemmas. Based on the aforementioned results, we provide several conditions for mono or bi-cluster flocking. 

\subsection{Velocity control function.} In this subsection, we impose several conditions on a velocity control function $G$ and study its implication to the dynamics. We recall the CS model with velocity control:
\begin{align}\label{A-3}
	\begin{cases}
		\dot{q}_i= G(p_i),\quad t>0, \quad i \in [N], \vspace{.2cm}\\
		\displaystyle \dot{p}_i=\frac{\kappa}{N}\sum_{k=1}^N\psi(|q_k-q_i|)(G(p_k)-G(p_i)) \vspace{.2cm},\\
		(q_i,p_i) \big|_{t = 0+} = (q_i^0,p_i^0), \quad p_i,q_i \in \bbr^d,
	\end{cases}
\end{align}
where $\kappa>0$. Throughout Section \ref{sec:2}, we assume
\[
	\psi \in (L^\infty \cap C^{0,1})(\bbr_+;\bbr_+), \quad  (\psi(r)-\psi(s))(r-s) \leq 0, \quad \forall r,s \in \bbr_+.
\]
In \eqref{A-3}, $G$ is assumed to be radially symmetric. More precisely, we assume:
\begin{align}\begin{aligned}\label{G}
	&G(p)=
	\begin{cases}
	\displaystyle g(|p|)\frac{p}{|p|}&, \quad \mathrm{if} \quad p \neq 0, \\
	0&, \quad \mathrm{if} \quad p = 0,
	\end{cases}
	\quad g \in C^1(\bbr_{\geq 0}),
	\quad g(0)=0, \\
	&0 < m_{g'} \leq g' \leq M_{g'} ~ \text{on any compact interval}, \\
	&g ~ \text{is convex or concave on $\bbr_+$},
\end{aligned}\end{align}
where $m_{g'}$ and $M_{g'}$ may depends on a compact interval. 

\begin{example}\label{E2.1} We address some possible examples of velocity control function $G$.
	\begin{enumerate}
		\item \textbf{The CS model.} The simplest and most motivating example for $G$ is the identity mapping $G(p)=p$. In this case, system \eqref{A-3} represents a standard CS model \cite{CS07-2,CS07-1}.
		\item \textbf{Speed limit model.} Suppose that $G$ is bounded, say $g(\bbr)=[0,M), ~ M<\infty$. Then we have
		\[
			|\dot{q}_i| = |G(p_i)| < M,
		\]
		and the maximum speed of agents is always bounded by $M$. This may not feature out for the model \eqref{A-3} since maximal speed always decrease in this case (see Proposition \ref{P2.1}). However, despite the presence of extra force, which may increase the maximal speed(e.g., random noise \cite{AH} or bonding force \cite{ABHY}), we can still guarantee the speed limitation.
		\item \textbf{Physical models.} Several physical effects can be reflected by the suitable choice of $G$. For example, if we involve the Lorentz factor $\Gamma$ as follows:
		\[
			g^{-1}:[0,c) \mapsto \bbr, \quad g(v):= \Gamma\left(1+\frac{\Gamma}{c^2}\right)v, \quad \Gamma:=\frac{1}{\sqrt{1-\frac{v^2}{c^2}}},
		\]
		then the model \eqref{A-3} becomes tha relativistic Cucker-Smale (RCS) model, which is introduced as the relativistic correction of the CS model. For the derivation and emergent dynamics of the RCS model, we refer to \cite{AHK,HKR20}. Other than relativistic effects, physical semantics like proper velocity or rapidity can be reflected \cite{BHK2,M-A-H-K}.
		\item \textbf{Almost unit speed model.} In literature, several Vicsek-type models with a unit speed constraint have been studied in terms of the heading angle. For the CS model with unit speed, refer to \cite{CH}. In terms of \eqref{A-3}, this might be represented by choice  of $g_0 \equiv 1$ on $\bbr_{+}$. This does not fulfill \eqref{G}, but can be approximated by functions satisfying \eqref{G}. For example, for $g_\varepsilon(p):=\tanh(p/\varepsilon)$, we expect
		\[
			g_\varepsilon \xrightarrow{\varepsilon \searrow 0} g_0 = 1 ~ \text{on ~ $\bbr_+$},
		\]
		and we formally have a close-to-unit speed model under $\varepsilon \ll 1$. Compared to the model in \cite{CH}, the above model does not strictly have unit speed but has the advantage of applying methodology consistent with the standard CS model. 
		
	\end{enumerate}
\end{example}

Technical reason for conditions \eqref{G} will naturally rise up in the following proposition.

\begin{proposition}\label{P2.1}
	Let $(P,Q)$ be a global solution to \eqref{A-3} with initial data $(P^0,Q^0)$. Then the following holds.
	\begin{enumerate}
	\item $\sum_{k=1}^N p_j$ is conserved:
	\[
		\frac{d}{dt}\sum_{k=1}^N p_j(t) = 0.
	\]
	\item Maximum modulus of $p_i$ decrease in time:
	\[
		\max_{i \in [N]}|p_i(t)|  \leq \max_{i \in [N]}|p_i(s)| , \quad 0 \leq s \leq t.		
	\]
	In particular,
	\begin{align}\label{A-5}
		\sup_{t \geq 0}\max_{i \in [N]}|p_i(t)| = \max_{i \in [N]}|p_i^0| =: P^0_M.
	\end{align}
	\item For any $t \in \bbr_+$ and $i,j \in [N]$,
	\begin{align*}
	m_{G'}|p_i(t)-p_j(t)| \leq |G(p_i(t))-G(p_j(t))| \leq M_{G'}|p_i(t)-p_j(t)|,
	\end{align*}
	where
	\[
		M_{G'} := \max \{ g'(|p|) : |p| \leq P^0_M \},
		\quad m_{G'} := \min \{ g'(|p|) : |p| \leq P^0_M \}.
	\]
	\item There exists a positive constant $\mathcal{M}=\mathcal{M}(P^0)>0$ satisfying
	\[
		\mathcal{M}|p_i(t)-p_j(t)|^2
		\leq (p_i(t)-p_j(t)) \cdot (G(p_i(t))-G(p_j(t)))
	\]
	for any $t \in \bbr_+$ and $i,j \in [N]$. 
	\end{enumerate}
\end{proposition}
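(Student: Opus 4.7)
My plan is to prove the four items essentially in the order stated, with (3) and (4) handled together since they share a common linear-algebraic core.

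For item (1), I would simply sum the $\dot p_i$ equation in \eqref{A-3} over $i \in [N]$. Since $\psi_{ik}=\psi_{ki}$, the resulting double sum $\sum_{i,k}\psi_{ik}(G(p_k)-G(p_i))$ is antisymmetric under $i\leftrightarrow k$ and therefore vanishes, giving $\tfrac{d}{dt}\sum_i p_i\equiv 0$.

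For item (2), the natural object to examine is $f_i(t):=|p_i(t)|^2$, and the goal is to bound the upper Dini derivative of $\max_i f_i$ by zero. At any fixed $t$, I would pick $i_\ast\in\argmax_i|p_i(t)|$ and compute $\tfrac{1}{2}\tfrac{d}{dt}f_{i_\ast} = \frac{\kappa}{N}\sum_k\psi_{i_\ast k}\,p_{i_\ast}\cdot(G(p_k)-G(p_{i_\ast}))$. Cauchy--Schwarz gives $p_{i_\ast}\cdot G(p_k)\leq |p_{i_\ast}|\,g(|p_k|)$ while $p_{i_\ast}\cdot G(p_{i_\ast})=|p_{i_\ast}|\,g(|p_{i_\ast}|)$; combined with monotonicity of $g$ (from $g'>0$) and $|p_k|\leq |p_{i_\ast}|$, every summand is non-positive. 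A standard envelope argument (upper Dini derivative of a finite max of $C^1$ functions) then promotes this pointwise bound to $\max_i|p_i(\cdot)|$ being non-increasing, yielding \eqref{A-5}.

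Items (3) and (4) both reduce to spectral control of the Jacobian of $G$. A direct differentiation of $G_i(p) = g(|p|)\,p_i/|p|$ gives, for $p\neq 0$,
\[
    DG(p) = g'(|p|)\,\hat p\,\hat p^{\top} + \frac{g(|p|)}{|p|}\bigl(I - \hat p\,\hat p^{\top}\bigr),\qquad \hat p:=\frac{p}{|p|},
\]
which is symmetric with eigenvalues $g'(|p|)$ along $\hat p$ and $g(|p|)/|p|$ (multiplicity $d-1$) on $\hat p^{\perp}$; a Taylor expansion recovers $DG(0)=g'(0)I$. Using $g(0)=0$ together with the one-sided convex-or-concave hypothesis from \eqref{G}, the secant slope $g(r)/r$ always lies between $g'(0)$ and $g'(r)$ (chord below tangent for convex $g$, above for concave $g$). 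Since item (2) confines each trajectory to $\{|p|\leq P^0_M\}$ and the segment $\xi_t := p_j + t(p_i-p_j)$ stays in this ball for $t\in[0,1]$, I conclude that every eigenvalue of $DG(\xi_t)$ lies in $[m_{G'},M_{G'}]$.

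Writing $G(p_i)-G(p_j)=\int_0^1 DG(\xi_t)(p_i-p_j)\,dt$ then delivers everything at once: the operator-norm bound $\|DG(\xi_t)\|_{\mathrm{op}}\leq M_{G'}$ gives the upper estimate in (3); testing against $p_i-p_j$ and using coercivity $(p_i-p_j)\cdot DG(\xi_t)(p_i-p_j)\geq m_{G'}|p_i-p_j|^2$ gives (4) with $\mathcal M:=m_{G'}$; and combining (4) with Cauchy--Schwarz, $m_{G'}|p_i-p_j|^2\leq (p_i-p_j)\cdot(G(p_i)-G(p_j))\leq |p_i-p_j|\,|G(p_i)-G(p_j)|$, yields the lower estimate in (3). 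The one nontrivial step, and the only place where the structural hypothesis \eqref{G} enters substantively, is the pinch $\min\{g'(0),g'(r)\}\leq g(r)/r\leq \max\{g'(0),g'(r)\}$; I expect that to be a short mean-value argument but it is the linchpin of the whole proposition.
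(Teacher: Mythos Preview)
Your proposal is correct. Parts (1), (2), and the Jacobian computation in (3) match the paper almost verbatim; you are simply more explicit in (2) about why $p_{i_\ast}\cdot(G(p_k)-G(p_{i_\ast}))\le 0$, whereas the paper just cites ``maximality.''

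The genuine divergence is in (4). The paper does not use the integral representation $G(p_i)-G(p_j)=\int_0^1 DG(\xi_t)(p_i-p_j)\,dt$; instead it writes out an algebraic decomposition
\[
(p_i-p_j)\cdot(G(p_i)-G(p_j))=\frac{g(|p_j|)}{|p_j|}\,|p_i-p_j|^2+\Bigl(\frac{g(|p_i|)}{|p_i|}-\frac{g(|p_j|)}{|p_j|}\Bigr)(p_i-p_j)\cdot p_i
\]
and handles the convex and concave cases separately (the concave case via $g^{-1}$), ending with $\mathcal{M}=\min\{m_{G'},\,m_{G'}^2/M_{G'}\}$. Your spectral route is cleaner and yields the sharper constant $\mathcal{M}=m_{G'}$. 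In fact your argument is stronger than you state: the pinch you flag as the ``linchpin'' is not needed in the form $g'(0)\le g(r)/r\le g'(r)$ (or its reverse); the ordinary mean value theorem already gives $g(r)/r=g'(\xi)\in[m_{G'},M_{G'}]$ for some $\xi\in(0,r)$, so both eigenvalues of $DG$ lie in $[m_{G'},M_{G'}]$ without invoking convexity or concavity at all. This means your proof of (4) goes through without that structural hypothesis, whereas the paper's proof genuinely uses it (and even remarks that convexity can be dropped only when $d=1$). For the lower bound in (3) the paper instead bounds the operator norm of $(G^{-1})'$; your Cauchy--Schwarz-from-(4) trick is an equivalent but tidier alternative.
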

\begin{proof} (1) We sum $\eqref{A-3}_2$ over $i \in [N]$ and utilize the index symmetry to see
\[
	\frac{d}{dt}\sum_{k=1}^N p_j
	= \frac{\kappa}{N}\sum_{i,k=1}^N\psi(|q_k-q_i|)(G(p_k)-G(p_i))
	= \frac{\kappa}{N}\sum_{i,k=1}^N\psi(|q_i-q_k|)(G(p_i)-G(p_k))=0.
\] 
	(2) Define $M(t) \in \textrm{argmax}_{ i \in [N]}|p_i(t)|$. Let time $t$ and index $M(t)$ be fixed, and set $\ell=M(t)$. 
Then we have
\[
	\frac{d}{dt}|p_\ell|^2
	= \frac{\kappa}{N} \sum_{k=1}^N \psi(|q_k-q_\ell |)  p_\ell \cdot  (G(p_k)-G(p_\ell))
	\leq 0,
\]
where the inequality holds from the maximality of $M$. This proves \eqref{A-5}. \newline
(3) The Jacobian of $G$ at $p$ is
\[
	G'(p) =
	\begin{cases}
	\displaystyle \frac{g(|p|)}{|p|}\mathrm{Id} + \left( g'(|p|)-\frac{g(|p|)}{|p|} \right)\frac{ p \otimes p}{|p|^2}&, \quad \mathrm{if} \quad p\neq0, \vspace{.2cm} \\
	g'(0)\mathrm{Id}&, \quad \mathrm{if} \quad p=0,
	\end{cases}
\]
Since the eigenvalues of $p \otimes p$ are $0$ and $|p|^2$ up to multiplicity, eigenvalues of $G'$ are
\begin{align*}
	\lambda_1 =
	\begin{cases}
		\displaystyle \frac{g(|p|)}{|p|}, \quad &\text{if} \quad p \neq 0, \vspace{.2cm} \\
		g'(0), \quad &\text{if} \quad p = 0,
	\end{cases} \quad \lambda_2 = g'(|p|)
\end{align*}
Due to symmetry of $G'$, the largest eigenvalue is the operator norm of $G'$, and this is bounded by $M_{G'}$ from (2). Therefore the mean value theorem implies
\[
	|G(p_i)-G(p_j)| \leq M_{G'}|p_i-p_j|.
\]
On the other hand, operator norm of inverse Jacobian $(G^{-1})'$ is $\frac{1}{\min\{\lambda_1,\lambda_2\}}$, which is less or equal to $\frac{1}{m_{G'}}$. Therefore we have
\[
	m_{G'}|p_i-p_j| \leq |G(p_i)-G(p_j)| \leq M_{G'}|p_i-p_j|.
\]
(4) Throughout the proof, without the loss of generality, we assume $|p_i| \geq |p_j|$. First suppose that $g$ is convex on $\bbr_+$, so that $| \cdot | \mapsto \frac{g(|\cdot|)}{|\cdot|}$ is an increasing function. Then,
\begin{align*}
	( p_i-p_j ) \cdot & (G(p_i)-G(p_j) ) \\
&= \frac{g(|p_j|)}{|p_j|}|p_i-p_j|^2 + \left(\frac{g(|p_i|)}{|p_i|} - \frac{g(|p_j|)}{|p_j|}\right)( p_i-p_j ) \cdot  p_i \\
&\geq m_{G'}|p_i-p_j|^2.
\end{align*}
Now suppose that $g$ is concave on $\bbr_+$. Since $g$ is increasing, $g^{-1}$ is convex and this yields
\begin{align*}
	( p_i&-p_j) \cdot ( G(p_i)-G(p_j) ) \\
&= \frac{g^{-1}(|G(p_j)|)}{|G(p_j)|}|G(p_i)-G(p_j)|^2 \\
&\hspace{.5cm} + \left(\frac{g^{-1}(|G(p_i)|)}{|G(p_i)|} - \frac{g^{-1}(|G(p_j)|)}{|G(p_j)|}\right)( G(p_i)-G(p_j)) \cdot G(p_i) \\
&\geq \frac{m_{G'}^2}{M_{G'}}|p_i-p_j|^2.
\end{align*}
We pose $\mathcal{M}:=\min\{{m_{G'}},\frac{m_{G'}^2}{M_{G'}}\}$ to complete the proof. Since $m_{G'}$ and $M_{G'}$ depends only on $P_0$, so is $\mathcal{M}$.
\end{proof}

\begin{remark} ~ 
	\begin{enumerate}
	\item In \eqref{G}, $m_{g'}$ and $M_{g'}$ depend on the interval. However, thanks to the uniform-in-time boundedness of $|p_i|$, we can fix the interval by $[0,P^0_M]$, and this enables us to fix $m_{g'}$ and $M_{g'}$ according to the initial data, namely $M_{G'}$ and $m_{G'}$.
	\item In Proposition \ref{P2.1}, if an ambient space of \eqref{A-3} is one-dimensional $(d=1)$, then the inner product is merely a scalar multiplication, and (3) proves (4) without convexity or concavity assumption on $g$.
	\end{enumerate}
\end{remark}

\subsection{Emergence of asymptotic flocking.}
The CS model is one of the most successful models designing the flocking behavior, and we can still expect the emergent flocking of \eqref{A-3} as well. We recall the definition of (asymptotic) flocking for a model \eqref{A-3}.

\begin{definition}\label{D1.1} Let $(P,Q)$ be a global solution to \eqref{A-3}.
	\begin{enumerate}
	\item We say that $(P,Q)$ exhibits a (mono-cluster) flocking if
	\begin{align*}
		\sup_{t \geq 0} D_Q(t) < \infty,
		\quad \lim_{t \to \infty} D_P(t) =0. 
	\end{align*}
	\item We say that $(P,Q)$ exhibits a bi-cluster flocking if there exists a nonempty proper subset $S$ of $[N]$ satisfying
	\begin{align*}
		&\sup_{t \geq 0} \max\{ D_{Q,S}(t), D_{Q,[N]-S}(t) \} < \infty,
		\quad \sup_{t \geq 0}\min_{\substack{i \in S \\ j \notin S}}|q_i(t) - q_j(t)| = \infty, \\
		&\lim_{t \to \infty} D_{P,S}(t) = \lim_{t \to \infty} D_{P,[N]-S}(t) =0.
	\end{align*}
	\end{enumerate}
\end{definition}


In the following lemma, we estimate the relative distance for an arbitrary collection of agents, which will be used repeatedly through Section \ref{sec:2}, Section \ref{sec:3}, and Section \ref{sec:4}.

\begin{lemma}[Subsystem estimation]\label{L2.1} Let $(P,Q)$ be a solution to \eqref{A-3}. For any $[l] \subset [N]$, we have the following differential inequalities.
\begin{enumerate}
	\item
	\[
	\frac{d}{dt} \| P \|_{[l]} \leq
	-\frac{\kappa\mathcal{M}l}{N}\psi(\|Q\|_{[l]})\|P\|_{[l]} + \frac{2\kappa M_{G'}(N-l)P^0_ML_{\psi, [l]}}{N}\|Q\|_{[l]},
	\]
	\begin{align}\label{Lippsi}
	L_{\psi,[l]}(t) := \sup_{\substack{r,s \geq {q}_{[l]}(t), \\ r \neq s}} \left| \frac{\psi(r)-\psi(s)}{r-s} \right| <\infty,
	\quad {q}_{[l]}(t):= \min_{\substack{i' \in [l] \\ j' \notin [l]}}|q_{i'}(t)-q_{j'}(t)|.
	\end{align}
	\item
	\[
	\frac{d}{dt} \| P \|_{[l]} \leq
	-\frac{\kappa\mathcal{M}l}{N}\psi(\|Q\|_{[l]})\|P\|_{[l]}
	+\frac{4\kappa P^0_M M_{G'} l (N-l)}{N}\max_{\substack{i' \in [l] \\ j' \notin [l]}}{\psi_{i'j'}}.
	\]
\end{enumerate}

\begin{proof}
(1) For simplicity, let $p_{ij}:=p_i-p_j$. We expand $\| P \|_{[l]}$ as
\begin{align*}
\begin{aligned} \label{C-1-1}
\frac{1}{2}\frac{d}{dt} \| P \|^2_{[l]} &= \sum_{i,j \in [l]} p_{ij} \cdot\left(\frac{dp_{i}}{dt} - \frac{dp_j}{dt}\right) \\
&= \frac{\kappa}{N} \sum_{i,j \in [l]} \sum_{k=1}^N
	\underbrace{ p_{ij} \cdot \left(  \psi_{ki} (G(p_k)- G(p_i))-\psi_{kj}(G(p_k)-G(p_j)) \right)}_{=:\mathcal{A}_{ijk}} \\
&= \frac{\kappa}{N}  \sum_{i,j,k \in [l]}  \mathcal{A}_{ijk} + \frac{\kappa}{N} \sum_{i,j \in [l], k \notin [l]} \mathcal{A}_{ijk} \\
& =: \frac{\kappa\mathcal{I}_{1}}{N} + \frac{\kappa\mathcal{I}_{2}}{N}. 
\end{aligned}
\end{align*}

For the estimate of $\mathcal{I}_{1}$, we utilize the symmetry of indices to use the index switching trick $(i,j,k) \to (j,k,i)$ to obtain
\begin{align*}
	\mathcal{I}_{1}
	&= \sum_{i,j,k \in [l]} \psi_{ki} p_{ij} \cdot (G(p_k)- G(p_i))
	- \sum_{i,j,k \in [l]} \psi_{kj} p_{ij} \cdot (G(p_k)-G(p_j)) \\
	&= \sum_{i,j,k \in [l]} \psi_{ki} p_{ij} \cdot (G(p_k)- G(p_i))
	+ \sum_{i,j,k \in [l]} \psi_{ki} p_{jk} \cdot (G(p_k)-G(p_i))  \\
	&= \sum_{i,j,k \in [l]} \psi_{ki} p_{ik} \cdot (G(p_k)- G(p_i)) \\
	&\leq -\mathcal{M}\sum_{i,j,k \in [l]} \psi_{ki}|p_{ik}|^2 \leq -\mathcal{M}l\psi(\|Q\|_{[l]})\|P\|_{[l]}^2.
\end{align*}

For the estimate of $\mathcal{I}_{2}$, we use the Lipschitz continuity of $\psi$ to get
\[
	|\psi_{ki}-\psi_{kj}| \leq L_{\psi,[l]}(t)\big||q_k-q_i|-|q_k-q_j|\big|
	\leq L_{\psi,[l]}(t)|q_i-q_j|,
\]
where $L_{\psi,[l]}$ is a nonnegative function defined as \eqref{Lippsi}. Then a direct computation yields
\begin{align*}
\mathcal{I}_{2} &= \sum_{i,j \in [l], k \notin [l]} p_{ij} \cdot \left(\psi_{ki}(G(p_k)-G(p_i))-\psi_{kj}(G(p_k)-G(p_j))\right) \\
&= \sum_{i,j \in [l], k \notin [l]} p_{ij} \cdot  [\psi_{ki}( G(p_j)- G(p_i)) +(\psi_{ki}-\psi_{kj})(G(p_k)-G(p_j))] \\
& \leq \sum_{i,j \in [l], k \notin [l]} (\psi_{ki}-\psi_{kj})p_{ij} \cdot (G(p_k)-G(p_j))\\
& \leq 2M_{G'}P^0_M L_{\psi, [l]}
 \sum_{i,j \in [l], k \notin [l]} |q_i - q_j | |p_i-p_j|. \\
 & \leq 2M_{G'}(N-l)P^0_M L_{\psi, [l]}\|P\|_{[l]}\|Q\|_{[l]},
\end{align*} 
where we used 
\[
	0 \leq \mathcal{M}|p_i-p_j|^2 \leq p_{ij}(G(p_i)-G(p_j))
\]
for the first inequality.
Combining the estimates altogether, we obtain
\[
	\frac{d}{dt} \| P \|_{[l]} \leq
	-\frac{\kappa\mathcal{M}l}{N}\psi(\|Q\|_{[l]})\|P\|_{[l]} + \frac{2\kappa M_{G'}(N-l)P^0_ML_{\psi, [l]}}{N}\|Q\|_{[l]}.
\]
(2) We estimate $\mathcal{I}_2$ as following.
\begin{align*}
\mathcal{I}_{2}
& \leq \sum_{i,j \in [l], k \notin [l]} (\psi_{ki}-\psi_{kj})p_{ij} \cdot (G(p_k)-G(p_j))\\
& \leq 2\max_{\substack{i' \in [l] \\ j' \notin [l]}}\psi_{i'j'}\sum_{i,j \in [l], k \notin [l]}|p_i-p_j||G(p_k)-G(p_j)| \\
&\leq 4P^0_M M_{G'} l(N-l)\max_{\substack{i' \in [l] \\ j' \notin [l]}}\psi_{i'j'}\|P\|_{[l]}.
\end{align*} 
Together with the estimate of $\mathcal{I}_1$, we have
\[
	\frac{d}{dt} \| P \|_{[l]} \leq
	-\frac{\kappa\mathcal{M}l}{N}\psi(\|Q\|_{[l]})\|P\|_{[l]}
	+\frac{4\kappa P^0_M M_{G'} l(N-l)}{N}\max_{\substack{i' \in [l] \\ j' \notin [l]}}{\psi_{i'j'}}.
\]
\end{proof}

In particular, choice of $[l]=[N]$ leads to the following result on the emergence of flocking.

\end{lemma}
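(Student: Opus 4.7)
The plan is to differentiate $\|P\|_{[l]}^2$ along trajectories of \eqref{A-3}, split the resulting sum over $k$ into contributions from $k\in[l]$ and from $k\notin[l]$, and estimate the two pieces separately; call them $\mathcal{I}_1$ and $\mathcal{I}_2$. Both differential inequalities in the statement share the same dissipative core from $\mathcal{I}_1$; only the treatment of $\mathcal{I}_2$ differs between (1) and (2).

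For $\mathcal{I}_1$ (the internal triple $(i,j,k)\in [l]^3$), the crucial move is to exploit the full symmetry of the dummy indices. I would apply the cyclic relabeling $(i,j,k)\mapsto(j,k,i)$ inside the $\psi_{kj}$-term so that it carries the same kernel $\psi_{ki}$ as its partner, and then add the two. The bracket telescopes into the single symmetric form $\psi_{ki}\,(p_i-p_k)\cdot\bigl(G(p_i)-G(p_k)\bigr)$. The coercivity estimate of Proposition~\ref{P2.1}(4) then lower-bounds each summand by $\mathcal{M}\psi_{ki}|p_i-p_k|^2$. Using monotonicity of $\psi$ (from the Type I assumption) together with $|q_i-q_k|\leq \|Q\|_{[l]}$ whenever $i,k\in[l]$, and summing over the free index $j$ to produce the factor $l$, yields $\mathcal{I}_1\leq -\mathcal{M}\,l\,\psi(\|Q\|_{[l]})\,\|P\|_{[l]}^2$.

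For $\mathcal{I}_2$, I would first rewrite the bracket as
\[
\psi_{ki}\bigl(G(p_j)-G(p_i)\bigr)+(\psi_{ki}-\psi_{kj})\bigl(G(p_k)-G(p_j)\bigr).
\]
The first piece, once dotted with $p_i-p_j$, is nonpositive by Proposition~\ref{P2.1}(4) and can be discarded in the upper bound. The second piece is where (1) and (2) diverge. For part (1), I would use Lipschitz continuity of $\psi$ on $[q_{[l]}(t),\infty)$ to get $|\psi_{ki}-\psi_{kj}|\leq L_{\psi,[l]}(t)\,|q_i-q_j|$, and control $|G(p_k)-G(p_j)|\leq 2 M_{G'} P^0_M$ via Proposition~\ref{P2.1}(2)--(3); after counting indices one obtains the cross term in $\|P\|_{[l]}\|Q\|_{[l]}$. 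For part (2), I would instead bound $|\psi_{ki}-\psi_{kj}|$ crudely by $2\max_{i'\in [l],\,j'\notin[l]}\psi_{i'j'}$, sidestepping Lipschitz regularity entirely and producing a term linear in $\|P\|_{[l]}$. Dividing the resulting inequality for $\tfrac{1}{2}\tfrac{d}{dt}\|P\|_{[l]}^2$ by $\|P\|_{[l]}$ converts the quadratic dissipation into the decay estimate displayed in the lemma.

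The main obstacle is the symmetrization for $\mathcal{I}_1$: two sums that look genuinely different must be combined, through a cyclic relabeling of a triple of dummy indices, into a manifestly dissipative quadratic form, and one has to be careful that only $(i,k)$-pairs appear in the final expression while the $j$-sum contributes merely a multiplicative factor. A secondary subtlety in part (1) is that $L_{\psi,[l]}(t)$ is a supremum on $[q_{[l]}(t),\infty)$ rather than on all of $\bbr_+$; this is the correct domain because $|q_k-q_i|\geq q_{[l]}(t)$ whenever $i\in[l]$ and $k\notin[l]$, and finiteness of this supremum still follows from $\psi\in C^{0,1}$.
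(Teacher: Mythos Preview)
Your proposal is correct and follows essentially the same route as the paper: the same $\mathcal{I}_1/\mathcal{I}_2$ split, the same cyclic relabeling $(i,j,k)\mapsto(j,k,i)$ to symmetrize $\mathcal{I}_1$, the same decomposition of the $\mathcal{I}_2$-bracket with the $\psi_{ki}(G(p_j)-G(p_i))$ piece discarded via Proposition~\ref{P2.1}(4), and the same two treatments of $\psi_{ki}-\psi_{kj}$ (Lipschitz for (1), crude $2\max\psi_{i'j'}$ for (2)). Your remarks on why $L_{\psi,[l]}$ is restricted to $[q_{[l]}(t),\infty)$ and on the final division by $\|P\|_{[l]}$ are also in line with the paper's argument.
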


\begin{theorem}[Emergence of asymptotic flocking]\label{T2.1}
Let $(P,Q)$ be a solution to \eqref{A-3}. 
\begin{enumerate}
\item The following three statements are equivalent.
	\begin{enumerate}
		\item $(P,Q)$ exhibits flocking; $\sup_{t \geq 0}D_Q(t) < \infty, ~ \lim_{t \to 0}D_P(t)=0$.
		\item $D_P$ decays exponentially; $D_P(t) \leq Be^{-Ct}, ~ B,C>0$. 
		\item Agents are spatially bounded; $\sup_{t \geq 0}D_Q(t) < \infty$.
	\end{enumerate} \vspace{.2cm}
\item Suppose that
	\[
		\|P^0\| < \frac{\mathcal{M}\kappa}{M_{G'}}\int_{\|Q^0\|}^{+\infty}\psi(s)\,ds.
	\]
	Then $(P,Q)$ exhibits flocking. In particular, if $\|\psi\|_{L^1(\bbr_+)} = \infty$, then flocking happens unconditionally.
\end{enumerate}
\end{theorem}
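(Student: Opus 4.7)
For part (1), the plan is to establish the implications $(b) \Rightarrow (a) \Rightarrow (c) \Rightarrow (b)$. The inclusion $(a) \Rightarrow (c)$ is a direct consequence of the definition of flocking. For $(b) \Rightarrow (a)$, I will use Proposition \ref{P2.1} (3) and the Cauchy--Schwarz inequality to derive $\tfrac{d}{dt}\|Q\| \leq M_{G'}\|P\|$, so that exponential decay of $D_P$ (equivalently of $\|P\|$, since $D_P \leq \|P\| \leq N D_P$) integrates to a uniform bound on $\|Q\|$ and thus on $D_Q$; the convergence $D_P(t) \to 0$ is already built into (b). For $(c) \Rightarrow (b)$, I apply Lemma \ref{L2.1} with $[l]=[N]$: the boundary term vanishes since $N-l=0$, leaving the clean inequality
\[
\frac{d}{dt}\|P\| \leq -\kappa\mathcal{M}\,\psi(\|Q\|)\,\|P\|.
\]
The monotonicity of $\psi$ (from $(\psi(r)-\psi(s))(r-s)\leq 0$) together with positivity of $\psi$ and the hypothesis $Q^\infty:=\sup_t \|Q(t)\| <\infty$ yields $\psi(\|Q(t)\|) \geq \psi(Q^\infty) > 0$, producing exponential decay of $\|P\|$ at rate $\kappa\mathcal{M}\psi(Q^\infty)$.

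For part (2), I will run a Lyapunov argument that mirrors the classical Cucker--Smale treatment. From the same two differential inequalities
\[
\frac{d}{dt}\|P\| \leq -\kappa\mathcal{M}\,\psi(\|Q\|)\,\|P\|, \qquad \frac{d}{dt}\|Q\| \leq M_{G'}\|P\|,
\]
the functional
\[
\mathcal{E}(t) := \|P(t)\| + \frac{\mathcal{M}\kappa}{M_{G'}}\int_{\|Q^0\|}^{\|Q(t)\|}\psi(s)\,ds
\]
satisfies $\tfrac{d}{dt}\mathcal{E}(t) \leq 0$ by the chain rule, so $\mathcal{E}(t) \leq \mathcal{E}(0) = \|P^0\|$. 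Since the integral term is nonnegative, it is bounded by $\|P^0\|$ uniformly in $t$. Under the hypothesis $\|P^0\| < \tfrac{\mathcal{M}\kappa}{M_{G'}}\int_{\|Q^0\|}^{\infty}\psi(s)\,ds$, this forces $\|Q(t)\|$ to remain strictly below the threshold $R^\ast \in (\|Q^0\|, \infty]$ defined implicitly by $\tfrac{\mathcal{M}\kappa}{M_{G'}}\int_{\|Q^0\|}^{R^\ast}\psi(s)\,ds = \|P^0\|$, yielding $\sup_t \|Q(t)\| < \infty$; then part (1) delivers flocking. The case $\|\psi\|_{L^1(\bbr_+)} = \infty$ is automatically absorbed because the sufficient condition on $\|P^0\|$ is then vacuous.

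The step I expect to require the most care is the derivation of the inequality for $\|Q\|$: I need to pass from $\tfrac{d}{dt}\|Q\|^2 = 2\sum_{i,j}(q_i-q_j)\cdot(G(p_i)-G(p_j))$ to a bound by $2 M_{G'}\|Q\|\|P\|$ using the Lipschitz control from Proposition \ref{P2.1} (3) together with Cauchy--Schwarz, and to justify almost-everywhere differentiability so that the monotonicity of $\mathcal{E}$ is rigorous. The translation between $\|P\|, \|Q\|$ and $D_P, D_Q$ (equivalent up to a factor depending only on $N$) will also need to be stated explicitly when closing the argument in part (1).
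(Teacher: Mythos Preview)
Your proposal is correct and follows essentially the same approach as the paper: the same application of Lemma~\ref{L2.1} with $[l]=[N]$ to obtain $\frac{d}{dt}\|P\|\le -\kappa\mathcal{M}\psi(\|Q\|)\|P\|$, the same estimate $\big|\frac{d}{dt}\|Q\|\big|\le M_{G'}\|P\|$ from Proposition~\ref{P2.1}(3), and the same Lyapunov functional $\mathcal{E}$ (called $\mathcal{L}$ in the paper) for part~(2). Your treatment of $(b)\Rightarrow(a)$ is in fact slightly more explicit than the paper's, which simply declares it ``clear''; one small wording slip is that the integral term in $\mathcal{E}$ need not be nonnegative when $\|Q(t)\|<\|Q^0\|$, but your subsequent threshold argument with $R^\ast$ does not actually rely on that and is correct as stated.
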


\begin{proof}
	Define the function $\mathcal{L}$ as
	\begin{equation*}
		\quad \mathcal{L}(t):= \frac{{\mathcal{M}\kappa}}{ M_{G'}} \int_{ \|Q^0\|}^{ \|Q(t)\|}\psi(s)\,ds+\|P(t)\|.
	\end{equation*}
	We claim that $\dot{\mathcal{L}} \leq 0$. We first observe that
	\begin{align}\label{A-8}
		\left| \frac{d}{dt}\| Q \|^2_{[l]} \right|
		= 2 \left| \sum_{i,j \in [l]} ( q_i-q_j ) \cdot ( G(p_i)-G(p_j) ) \right|
		\leq 2M_{G'}\| P \|_{[l]} \| Q \|_{[l]}. 
	\end{align}
	We then choose $[l]=[N]$ and apply Lemma \ref{L2.1} to obtain
	\begin{align}\label{A-9}
	\frac{d}{dt} \| P \| \leq
	-{\kappa\mathcal{M}}\psi(\|Q\|)\|P\|.
	\end{align}
	Then this proves the claim:
	\begin{align*}
	\frac{d\mathcal{L}}{dt} &= \frac{\mathcal{M}\kappa}{M_{G'}} \psi( \|Q(t)\|)\frac{d\|Q(t)\|}{dt}+\frac{d\|P(t)\|}{dt}\\
	&\le {\mathcal{M}\kappa} \psi( \|Q(t)\|)\|P(t)\|-{\mathcal{M}\kappa} \psi( \|Q(t)\|)\|P(t)\|=0.
	\end{align*}
	($\bullet$ Proof of (1)) Implications from (a) to (c) and (b) to (a) are clear. Suppose that (c) holds. Since any norms are equivalent in a finite dimensional space, we prove the statement for the norm $\| \cdot \|$. We have
	\[
		\frac{d}{dt} \| P \| \leq
	-{\kappa\mathcal{M}}\psi(\|Q\|)\|P\| 
	\leq  -{\kappa\mathcal{M}}\psi\left(\sup_{t \geq 0}\|Q(t)\|\right)\|P\| 
	\leq -C\|P\|	
	\]
	for some positive constant $C$ independent of time. This yields
	\[
		\|P(t)\| \leq e^{-tC}\|P^0\|.
	\]
	($\bullet$ Proof of (2)) Since $\mathcal{L}$ decrease in time, we have
	\begin{align}\label{A-10}
		\frac{{\mathcal{M}\kappa}}{ M_{G'}} \int_{ \|Q^0\|}^{ \|Q(t)\|}\psi(s)\,ds+\|P(t)\|
		= \mathcal{L}(t) \leq \mathcal{L}(0) = \|P^0\|
		< \frac{\mathcal{M}\kappa}{M_{G'}}\int_{\|Q^0\|}^{+\infty}\psi(s)\,ds.
	\end{align}
	This proves $\sup_{t \geq 0}\|Q(t)\| < \infty$, and hence $\sup_{t \geq 0}D_Q(t) < \infty$. Therefore flocking emerges.
\end{proof}

In particular, Theorem \ref{T2.1} states that if flocking happens, then relative states converge at an exponential rate. Therefore, if agents are far enough, they will not collide.

\begin{corollary}\label{C2.1}
	Suppose that there exists a positive constant $0<M<\infty$ satisfying
	\begin{align}\label{A-11}
		\frac{M_{G'}\|P^0\|}{\kappa\mathcal{M}} < \min \left\{ \int_{\|Q^0\|}^M \psi(r) dr, \psi(M)\min_{i,j \in [N]}|q_i^0-q_j^0| \right\}.
	\end{align}
	Then we have
	\[
		\inf_{t \geq 0}\min_{i,j \in [N]}|q_i(t)-q_j(t)| > 0.
	\]
\end{corollary}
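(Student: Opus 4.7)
The plan is to first show that the assumption \eqref{A-11} is strong enough to invoke Theorem \ref{T2.1}(2), so flocking emerges, and moreover to obtain a uniform spatial bound $\|Q(t)\| \leq M$. Indeed, the Lyapunov inequality $\mathcal{L}(t) \leq \mathcal{L}(0) = \|P^0\|$ established in the proof of Theorem \ref{T2.1}, combined with the first half of the minimum in \eqref{A-11}, yields
\[
	\frac{\mathcal{M}\kappa}{M_{G'}}\int_{\|Q^0\|}^{\|Q(t)\|}\psi(s)\,ds \leq \|P^0\| < \frac{\mathcal{M}\kappa}{M_{G'}}\int_{\|Q^0\|}^{M}\psi(s)\,ds,
\]
so nonnegativity of $\psi$ forces $\|Q(t)\| \leq M$ for all $t \geq 0$.

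Next I would turn this spatial bound into a pointwise lower bound on the kernel. Since $\psi$ is non-increasing by the type I assumption, $\|Q(t)\| \leq M$ implies $\psi(\|Q(t)\|) \geq \psi(M)$. Plugging this into the basic decay estimate \eqref{A-9} gives
\[
	\frac{d}{dt}\|P(t)\| \leq -\kappa\mathcal{M}\psi(M)\|P(t)\|,
\]
and hence $\|P(t)\| \leq \|P^0\|e^{-\kappa\mathcal{M}\psi(M)t}$. Integrating in time produces the global velocity-dispersion control
\[
	\int_0^{\infty} \|P(s)\|\,ds \leq \frac{\|P^0\|}{\kappa\mathcal{M}\psi(M)}.
\]

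Finally, for any pair $i,j$ I would estimate the variation of $|q_i - q_j|$ directly: using $|p_i - p_j| \leq \|P\|$ and Proposition \ref{P2.1}(3),
\[
	\left|\frac{d}{dt}|q_i(t) - q_j(t)|\right| \leq |G(p_i) - G(p_j)| \leq M_{G'}\|P(t)\|.
\]
Integrating and using the previous bound,
\[
	|q_i(t) - q_j(t)| \geq |q_i^0 - q_j^0| - M_{G'}\int_0^t \|P(s)\|\,ds \geq \min_{i',j' \in [N]}|q_{i'}^0 - q_{j'}^0| - \frac{M_{G'}\|P^0\|}{\kappa\mathcal{M}\psi(M)}.
\]
The second half of the minimum in \eqref{A-11} is exactly the statement that the right-hand side is strictly positive, and since the bound is uniform in $t$ the conclusion follows.

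The main (only) nontrivial point is the first step: recognizing that the integral condition in \eqref{A-11} is not merely strong enough for flocking but in fact pins the trajectory inside the ball $\{\|Q\| \leq M\}$, which is what converts the type I monotonicity of $\psi$ into the strictly positive lower bound $\psi(M)$. Everything after that is a straightforward Grönwall-plus-integration argument.
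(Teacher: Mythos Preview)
Your proof is correct and follows essentially the same route as the paper: both first use the Lyapunov inequality $\mathcal{L}(t)\le \mathcal{L}(0)$ together with the integral condition in \eqref{A-11} to trap $\|Q(t)\|$ below $M$, then convert this via the monotonicity of $\psi$ and \eqref{A-9} into the exponential bound $\|P(t)\|\le \|P^0\|e^{-\kappa\mathcal{M}\psi(M)t}$, and finally integrate $|G(p_i)-G(p_j)|\le M_{G'}\|P\|$ to get the uniform lower bound on $|q_i-q_j|$. The only difference is presentational (the paper writes the final estimate as a single chain of inequalities, inserting $D_P$ as an intermediate step).
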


\begin{proof}
From \eqref{A-10} and \eqref{A-11}, we have $\sup_{t \geq 0}\|Q(t)\| < M < \infty$. This yields
\begin{align*}
	|q_i(t)-q_j(t)| &\geq |q_i^0-q_j^0| - \int_0^t |G(p_i(s))-G(p_j(s))| ds \\
	&\geq |q_i^0-q_j^0| - M_{G'}\int_0^t D_P(s) ds \\
	&\geq |q_i^0-q_j^0| - M_{G'}\int_0^t \|P(s)\| ds \\
	&\geq |q_i^0-q_j^0| - M_{G'}\|P^0\| \int_0^t \exp\left(-\kappa \mathcal{M} \int_0^s \psi(\|Q(\tau)\|)d\tau \right) ds  &&(\because \eqref{A-9}) \\
	&\geq |q_i^0-q_j^0| - M_{G'}\|P^0\| \int_0^t \exp\left(-\kappa \mathcal{M} \int_0^s \psi(M) d\tau \right) ds \\
	&\geq |q_i^0-q_j^0| - \frac{M_{G'}\|P^0\|}{\kappa\mathcal{M}\psi(M)}>0.
\end{align*}
\end{proof}

\begin{remark}
	Suppose that kernel is of the form
	\[
		\psi(|q|)=|q|^{-\alpha}, \quad \alpha>1.
	\]
	In this case, even if $\psi \notin (L^\infty \cap C^{0,1})(\bbr_+;\bbr_+)$, the result in Corollary \ref{C2.1} still holds. In fact, a priori condition relaxes to 
	\[
		\|P^0\| < \frac{\mathcal{M}\kappa}{M_{G'}}\int_{\|Q^0\|}^{+\infty}\psi(s)\,ds.
	\]
	and the proof will be provided in Theorem \ref{T4.1}.

\end{remark}

\subsection{Application to bi-cluster flocking}

Theorem \ref{T2.1} demonstrates a close relationship between spatial boundedness and the emergence of flocking. Likewise, spacial boundedness plays an essential role in bi-cluster flocking. 

\begin{proposition}\label{P2.2} Let $(P,Q)$ be a solution to \eqref{A-3}. Then the following two statements are equivalent.
			\begin{enumerate}
				\item  $(P,Q)$ exhibits bi-cluster flocking.
				\item There exists a partition $\{A,B\}$ of $[N]$ satisfying
				\begin{align*}
					&\sup_{t \geq 0} \max\{ D_{Q,A}(t), D_{Q,B}(t) \} < \infty,
					\quad \sup_{t \geq 0}\min_{\substack{i \in A \\ j \in B}}|q_i(t) - q_j(t)| = \infty.
			\end{align*}
			\end{enumerate}
\end{proposition}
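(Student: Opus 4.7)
The direction $(1)\Rightarrow(2)$ is immediate by taking $\{A,B\}=\{S,[N]\setminus S\}$ in Definition \ref{D1.1}, which already contains both spatial conditions. For $(2)\Rightarrow(1)$, the remaining task is to show the velocity alignment $\lim_{t\to\infty}D_{P,A}(t)=\lim_{t\to\infty}D_{P,B}(t)=0$. I plan to start from Lemma \ref{L2.1}(2) with index set $A$: since $D_{Q,A}(t)$ is uniformly bounded by hypothesis, $\|Q\|_A(t)$ is bounded and $\psi(\|Q\|_A(t))\geq c_A>0$ uniformly in $t$; together with the identity $\max_{i\in A,\,j\in B}\psi_{ij}(t)=\psi(d_{AB}(t))$ from monotonicity of $\psi$, where $d_{AB}(t):=\min_{i\in A,\,j\in B}|q_i(t)-q_j(t)|$, this gives
\[
\frac{d}{dt}\|P\|_A \leq -\alpha\,\|P\|_A + \beta\,\psi(d_{AB}(t))
\]
with positive constants $\alpha,\beta$, and an analogous inequality for $\|P\|_B$.

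As a preliminary, condition (2) forces $\psi$ to be integrable at infinity: the global Lyapunov $\mathcal{L}$ from the proof of Theorem \ref{T2.1} is non-increasing, and evaluating it along a sequence $t_n\to\infty$ with $\|Q(t_n)\|\to\infty$ (which exists via the inter-cluster bound $\|Q\|^2\geq 2|A||B|d_{AB}^2$ combined with $\sup_t d_{AB}=\infty$) yields $\int_{\|Q^0\|}^\infty\psi(s)\,ds<\infty$, so by monotonicity $\psi(r)\to 0$ as $r\to\infty$. Since continuity of $d_{AB}$ and $\sup_t d_{AB}=\infty$ give $\limsup_{t\to\infty}d_{AB}(t)=\infty$, I split on $\liminf_{t\to\infty}d_{AB}(t)$. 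When $d_{AB}(t)\to\infty$, the preliminary gives $\psi(d_{AB}(t))\to 0$, and Duhamel's formula applied to the above differential inequality, splitting the time integral at a large $T$ (exponential decay on $[0,T]$ and uniform smallness of $\psi(d_{AB})$ on $[T,t]$), yields $\|P\|_A(t)\to 0$ and symmetrically $\|P\|_B(t)\to 0$.

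When instead $\liminf_{t\to\infty}d_{AB}(t)<\infty$, I extract interleaved sequences $s_n<t_n<s_{n+1}\to\infty$ with $d_{AB}(s_n)\leq M$ bounded and $d_{AB}(t_n)\to\infty$. The triangle inequality $|q_i-q_j|\leq d_{AB}(t)+D_{Q,A}(t)+D_{Q,B}(t)$ for $(i,j)\in A\times B$ (via an inter-cluster minimizer) transfers this to $\|Q(s_n)\|\leq M'$ bounded while $\|Q(t_n)\|\to\infty$. Fixing $r^*>M'$, the intermediate value theorem furnishes a first-crossing time $\tau_n\in(s_n,t_n)$ with $\|Q(\tau_n)\|=r^*$, and the uniform velocity bound $\bigl|\tfrac{d}{dt}\|Q\|\bigr|\leq M_{G'}\|P\|$ (which is itself uniformly bounded in $t$ by Proposition \ref{P2.1}(2)) forces $\tau_n-s_n\geq(r^*-M')/V_Q$ for a fixed constant $V_Q$. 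On the pairwise disjoint intervals $[s_n,\tau_n]\subset[s_n,t_n]$ one has $\psi(\|Q(s)\|)\geq\psi(r^*)>0$, so summing yields $\int_0^\infty\psi(\|Q(s)\|)\,ds=\infty$; integrating \eqref{A-9} then forces $\|P(t)\|\to 0$, and the estimate $\|P\|_A(t),\|P\|_B(t)\leq\|P(t)\|$ closes this case.

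In either case velocity alignment within each sub-cluster holds, so bi-cluster flocking emerges. The main obstacle is the second case: the forcing $\psi(d_{AB})$ in the subsystem inequality for $\|P\|_A$ can revive after every dip of $d_{AB}$ and need not decay, so the subsystem argument has to be abandoned. The remedy is to extract a divergent integral of $\psi(\|Q\|)$ from the global energy dissipation \eqref{A-9}, using IVT together with the Lipschitz-in-time control of $\|Q\|$ to upgrade the mere hypothesis $\sup_t d_{AB}=\infty$ (without a pointwise limit) into a quantitative estimate strong enough to drive $\|P\|$ itself to zero.
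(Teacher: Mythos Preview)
Your proof is correct and follows a genuinely different route from the paper's.

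The paper does not split on $\liminf_{t\to\infty}d_{AB}(t)$. Instead it first proves a separate preliminary (Lemma~\ref{L2.2}): under the spatial hypotheses of (2), one necessarily has $\lim_{t\to\infty}d_{AB}(t)=\infty$. The proof of that lemma is rather involved: assuming $\liminf d_{AB}<\infty$, the paper tracks the maximal modulus $|p_\ell(t)|$, shows it must converge to $|p^0_{\mathrm{ave}}|$ along suitable time windows, upgrades this to $p_k(t)\to p^0_{\mathrm{ave}}$ for every $k$, and then derives a contradiction from the Lyapunov lower bound $\|P(t_n)\|\geq\frac{\mathcal{M}\kappa}{M_{G'}}\int_{\|Q(t_n)\|}^{\infty}\psi$ evaluated along the bounded sequence $t_n$. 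With Lemma~\ref{L2.2} in hand, only your Case~1 survives, and the subsystem Gr\"onwall argument finishes.

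Your Case~2 argument sidesteps Lemma~\ref{L2.2} entirely: rather than showing that this case is impossible, you simply prove the desired velocity alignment directly in it, by abandoning the subsystem inequality and returning to the global dissipation \eqref{A-9}. The oscillation of $\|Q\|$ between bounded values (at $s_n$) and arbitrarily large values (at $t_n$), together with the Lipschitz bound on $\tfrac{d}{dt}\|Q\|$, manufactures disjoint intervals of uniform length on which $\psi(\|Q\|)$ is bounded below, so that $\int_0^\infty\psi(\|Q\|)\,ds=\infty$ and hence $\|P\|\to 0$. This is considerably more economical for the purpose of Proposition~\ref{P2.2}. What the paper's approach buys in exchange is the independently interesting statement of Lemma~\ref{L2.2} itself---that two spatially bounded groups which separate in supremum must in fact separate in limit---which your argument does not recover (your Case~2, were it to occur, would yield velocity alignment without forcing $d_{AB}(t)\to\infty$).
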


To prove Proposition \ref{P2.2}, we introduce a preliminary lemma.

	\begin{lemma}\label{L2.2}
	Let $(P,Q)$ be a solution to \eqref{A-3}. Suppose that there exists a partition $\{A,B\}$ of $[N]$ satisfying
	\begin{align*}
		\sup_{t \geq 0}\max\{ D_{Q,A}(t), D_{Q,B}(t)\} < \infty.
	\end{align*}
	Then whenever two groups generate different clusters, two groups segregate:
	\[
		\sup_{t \in \bbr_+}\min_{i \in A, j \in B}|q_i(t)-q_j(t)|=\infty
		\quad \Longrightarrow \quad
		\lim_{t \to \infty}\min_{i \in A, j \in B}|q_i(t)-q_j(t)|=\infty.
	\]
	\end{lemma}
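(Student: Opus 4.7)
Introduce the center-of-mass separation $\xi(t):=\bar q_A(t)-\bar q_B(t)$ and observe $|d_{AB}(t)-|\xi(t)||\le D_{Q,A}(t)+D_{Q,B}(t)\le 2M_Q$ under the standing hypothesis, where $M_Q:=\sup_t\max\{D_{Q,A},D_{Q,B}\}<\infty$. Hence the conclusion is equivalent to $|\xi(t)|\to\infty$. Note also that $d_{AB}$ is $2G_{\max}$-Lipschitz in $t$, with $G_{\max}:=g(P^0_M)$, by Proposition~\ref{P2.1}(2). The plan is to argue by contradiction, assuming $\sup d_{AB}=\infty$ while $\liminf_{t\to\infty}d_{AB}<\infty$.

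\textbf{Two ingredients.} First, by Theorem~\ref{T2.1}(2), if $\psi$ did not vanish at infinity then $\|\psi\|_{L^1(\bbr_+)}=\infty$, forcing unconditional mono-cluster flocking and hence $\sup d_{AB}<\infty$, a contradiction. So $\psi(r)\to 0$ as $r\to\infty$, and in particular $\psi\in L^1$ near infinity. Second, apply Lemma~\ref{L2.1}(2) to the subset $A$ (after relabelling indices so $A=[|A|]$); using $\|Q\|_A\le |A|M_Q$ and the monotonicity of $\psi$ gives
\[
\frac{d}{dt}\|P\|_A(t)\le -\alpha_A\|P\|_A(t)+\beta_A\,\psi(d_{AB}(t)),
\]
for time-independent $\alpha_A,\beta_A>0$, and the analogous inequality holds for $B$. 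Gronwall on this will let me control the intra-cluster spread by intervals on which $d_{AB}$ stays large.

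\textbf{Locate near-consensus far away.} Using $\sup d_{AB}=\infty$ and Lipschitzness, for each large $R$ one can produce $T_R^-<T_R$ with $d_{AB}(T_R^-)=R/2$, $d_{AB}(T_R)=R$, $d_{AB}(t)>R/2$ on $(T_R^-,T_R)$, and $T_R-T_R^-\ge R/(4G_{\max})$. Gronwall over this interval combined with the uniform bound on $\|P\|_A$ from Proposition~\ref{P2.1} yields $\|P\|_A(T_R),\|P\|_B(T_R)\to 0$ as $R\to\infty$. This near-consensus gives $\bar G_A(t)=G(\bar p_A(t))+O(\|P\|_A(t))$, while the cluster mean momenta themselves drift slowly: $|\dot{\bar p}_A|,|\dot{\bar p}_B|\le C\psi(d_{AB}(t))$. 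Since $|\xi|$ increased by $R/2-O(1)$ over $[T_R^-,T_R]$ whereas the total variation of $\bar G_A-\bar G_B$ on that interval is $o_R(1)$ (via the slow-drift estimate and the first ingredient), a mean-value argument forces
\[
\hat\xi(T_R)\cdot(\bar G_A(T_R)-\bar G_B(T_R))\ge c_0-o_R(1)>0
\]
for some $c_0>0$ independent of $R$.

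\textbf{Propagation and main obstacle.} Set $T':=\inf\{t>T_R:d_{AB}(t)=R/2\}$. If $T'<\infty$, the same near-consensus and slow-drift estimates extend to $[T_R,T']$, so $\hat\xi(t)\cdot(\bar G_A-\bar G_B)(t)>0$ there, forcing $|\xi|$ nondecreasing and contradicting $d_{AB}(T')<d_{AB}(T_R)$. Hence $T'=\infty$, i.e.\ $d_{AB}(t)\ge R/2$ on $[T_R,\infty)$; letting $R\to\infty$ gives $\lim_t d_{AB}=\infty$. The hard part is the quantitative bookkeeping in the previous paragraph: one must convert a \emph{time-averaged} separating speed on $[T_R^-,T_R]$ into a \emph{pointwise} lower bound at $T_R$, which relies on the integrability of $\psi$ at infinity and on tight control of the drift of $\bar p_A,\bar p_B$; these same estimates must then be uniform enough to propagate past $T_R$ without collapsing into the previously-bounded regime.
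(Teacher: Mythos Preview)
Your approach---tracking the center-of-mass separation $\xi$ and arguing it must diverge---is quite different from the paper's. The paper never looks at $\xi$; instead it shows that if $\liminf_{t\to\infty} d_{AB}<\infty$, then along a recurrent time sequence $t_n$ the full diameter $D_Q(t_n)$ stays bounded, and uses a monotonicity/decay estimate for $\max_i|p_i|$ together with conservation of $\sum_i p_i$ to force \emph{every} $p_i(t)\to p^0_{\mathrm{ave}}$. This contradicts Theorem~\ref{T2.1}(2) applied at the times $t_n$, since $\|P(t_n)\|\to 0$ while $\|Q(t_n)\|$ is bounded and flocking does not occur.

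There is a genuine gap in your argument at the claim that the total variation of $\bar G_A-\bar G_B$ on $[T_R^-,T_R]$ is $o_R(1)$; this gap then undermines both the mean-value step producing $c_0>0$ and the propagation step. You correctly have $|\dot{\bar p}_A(t)|\le C\psi(d_{AB}(t))$, and on $[T_R^-,T_R]$ you only know $d_{AB}\ge R/2$, so the variation is bounded by $C\psi(R/2)(T_R-T_R^-)$. But your construction gives only the \emph{lower} bound $T_R-T_R^-\ge R/(4G_{\max})$; there is no upper bound, and $d_{AB}$ may oscillate in $(R/2,R)$ for an arbitrarily long time before first hitting $R$. The $L^1$ integrability of $\psi$ at infinity does not control $\int_{T_R^-}^{T_R}\psi(d_{AB}(t))\,dt$: that would require monotonicity of $d_{AB}$ or a lower bound on $|\dot d_{AB}|$, neither of which you have. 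Consequently, the time-averaged separating speed $(R/2-O(1))/(T_R-T_R^-)$ can be arbitrarily small, and you cannot extract a uniform $c_0>0$ at the endpoint. The same obstruction reappears on $[T_R,T']$, where $T'-T_R$ is again unbounded, so even a small drift rate $C\psi(R/2)$ can rotate $\bar G_A-\bar G_B$ enough to destroy the sign of $\hat\xi\cdot(\bar G_A-\bar G_B)$. (A minor point: $\psi(r)\to 0$ does not imply $\psi\in L^1$ near infinity; the correct route is to invoke Theorem~\ref{T2.1}(2) directly under the hypothesis $\int_a^\infty\psi=\infty$.)
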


	\begin{proof}
		It suffices to prove
		\[
			\liminf_{t \to \infty}\min_{i \in A, j \in B}|q_i(t)-q_j(t)|=\infty.
		\]
		Suppose on the contrary that
		\[
			\liminf_{t \to \infty}\min_{i \in A, j \in B}|q_i(t)-q_j(t)|=M_{AB}<\infty.
		\]
		Then there exists a time sequence $\{t_n\}_{n \in \mathbb{N}}$ satisfying
		\[
			t_1<t_2<\cdots, \quad \lim_{n \to \infty}t_n=\infty,
			\quad \sup_{n \in \mathbb{N}}\min_{i \in A, j \in B}|q_i(t_n)-q_j(t_n)| < 1+M_{AB}.
		\] 
		As each groups are spatially bounded, we have
		\[
			\sup_{n \in \mathbb{N}}\max_{i,j \in [N]}|q_i(t_n)-q_j(t_n)| < 1+M_{AB} + \sup_{t \in \bbr_+}\|Q(t)\|_{A} + \sup_{t \in \bbr_+}\|Q(t)\|_{B} =: M'_{AB} < \infty.
		\]
		Then for any $T>0$,
		\[
			\sup_{n \in \mathbb{N}}\sup_{t \in (t_n,t_n+T)}D_{Q}(t) \leq M'_{AB} + 2TM_{G'}P^0_M =: M'_{AB,T}< \infty,
		\]
		since $p_i(t)$ (resp. $\dot{q}_i(t)=(G(p_i(t))$) is bounded above by $P^0_M$ (resp. $M_{G'}P^0_M$) from Proposition \ref{P2.1}. Therefore if $(t_1,\infty) \subset \cup_{n \in \mathbb{N}} (t_n,t_n+T)$ for some finite $T$, flocking must emerge from Theorem \ref{T2.1}. Since $A$ and $B$ generates different cluster, this cannot happen, which yields
		\[
			\limsup_{n \to \infty}(t_{n+1}-t_n)=\infty.
		\]
		Passing to a subsequence, we may assume $\lim_{n \to \infty}(t_{n+1}-t_n)=\infty$. Let $\ell$ be an index maximizing $|p_i|$. We recall that
		\begin{align}\label{A-12-1}
		\frac{d}{dt}|p_\ell|^2
		= \frac{2\kappa}{N} \sum_{k=1}^N \psi_{k\ell} p_\ell \cdot (G(p_k)-G(p_\ell))
		\leq \frac{2\kappa}{N} \psi(D_Q)  \sum_{k=1}^N p_\ell \cdot  (G(p_k)-G(p_\ell)).
		\end{align}
		As a maximum of R.H.S. in \eqref{A-12-1} is achieved when each $p_i$ have same direction (i.e. $\cos(p_i,p_j)=1)$, we further estimate
		\begin{align*}
		\frac{d}{dt}|p_\ell|^2
		&\leq \frac{2\kappa}{N} \psi(D_Q)  \sum_{k=1}^N p_\ell \cdot \left(\frac{g(|p_k|)}{|p_\ell|}p_\ell - \frac{g(|p_\ell|)}{|p_\ell|}p_\ell\right) \\
		&\leq \frac{2\kappa m_{G'}}{N} \psi(D_Q)  \sum_{k=1}^N |p_\ell|\left({|p_k|} - {|p_\ell|}\right) \\
		&= - \frac{2\kappa m_{G'}}{N} \psi(D_Q) |p_\ell|\left( N|p_\ell|-\sum_{k=1}^N|p_k| \right)\\
		&= - {2\kappa m_{G'}}\psi(D_Q) |p_\ell|\left( |p_\ell|-\frac{1}{N}\left|\sum_{k=1}^N p_k \right| \right) && (\because \cos(p_i,p_j)=1) \\
		&=: - {2\kappa m_{G'}}\psi(D_Q) |p_\ell|\left( |p_\ell|-{|p^0_{\mathrm{ave}}|} \right).
		\end{align*}
	Note that $p^0_{\mathrm{ave}}$ is a constant vector, since the derivative of $\sum_{k=1}^N p_k$ is zero. Thus we have
	\[
		\frac{d}{dt}\left( |p_\ell|-{|p^0_{\mathrm{ave}}|} \right)
		= \frac{d}{dt}|p_\ell|
		\leq - {\kappa m_{G'}}\psi(D_Q)\left( |p_\ell|-{|p^0_{\mathrm{ave}}|} \right),
	\]
	which leads to
	\[
		0 \leq |p_\ell(t)| - |p^0_\mathrm{ave}|\leq  \exp\left( -\kappa m_{G'}\int_s^t \psi(D_Q(u)) du \right)(|p_\ell(s)|-|p^0_\mathrm{ave}|), \quad s \leq t,
	\]
	where the first inequality comes from the maximality of $\ell$. Now fix $T$ and take $N \gg 1$, so that each interval $(t_n, t_n+T)$ is disjoint for each $n \geq N$. Then 
	\[
		0 \leq |p_\ell(t_n+T)| - |p^0_\mathrm{ave}|
		\leq  \exp\left( -\kappa m_{G'}T \psi(M'_{AB,T}) \right)(|p_\ell(t_n)|-|p^0_\mathrm{ave}|), \quad n \geq N.
	\]
	On the other hand, $|p_\ell|$ is unconditionally decreasing from Proposition \ref{P2.1}. Thus,
	\begin{align*}
		0 \leq |p_\ell(t_{n+1}+T)| -  |p^0_\mathrm{ave}|
		&\leq  \exp\left( -\kappa m_{G'}T \psi(M'_{AB,T}) \right)(|p_\ell(t_{n+1})|-|p^0_\mathrm{ave}|) \\
		&\leq  \exp\left( -\kappa m_{G'}T \psi(M'_{AB,T}) \right)(|p_\ell(t_{n}+T)|-|p^0_\mathrm{ave}|) \\
		&\leq \exp\left( -\kappa m_{G'}2T \psi(M'_{AB,T}) \right)(|p_\ell(t_{n})|-|p^0_\mathrm{ave}|).
	\end{align*}
	Then a straightforward induction yields
	\[
		0
		\leq |p_\ell(t^*)| - |p^0_\mathrm{ave}|
		\leq \exp\left( -\kappa m_{G'}(m+1)T \psi(M'_{AB,T}) \right)(|p_\ell(t_{n})|-|p^0_\mathrm{ave}|),
		\quad t^* \geq t_{n+m}+T,
	\]
	and therefore
	\[
		\lim_{t \to \infty}|p_\ell(t)| = |p^0_\mathrm{ave}|.
	\]
	We again use the maximality of $\ell$ and apply the squeeze theorem to find
	\[
		N|p^0_\mathrm{ave}|
		=\left| \sum_{k=1}^N p_k(t) \right|
		\leq \sum_{k=1}^N  \left| p_k(t) \right| 
		\leq N\left| p_\ell(t) \right|,
		~ \text{so that} ~ \lim_{t \to \infty} \sum_{k=1}^N  \frac{1}{N} \left| p_k(t) \right| = |p^0_\mathrm{ave}|.
	\]
	Now we claim
	\begin{align}\label{A-13}
		\lim_{t \to \infty}p_k(t) = p^0_\mathrm{ave}, \quad k \in [N].
	\end{align}
	It suffices to show $\lim_{t \to \infty} \left| p_k(t) \right| = |p^0_\mathrm{ave}|$ for each $k$. Suppose the contrary. Then since
	\[
		\limsup_{t \to \infty} |p_k(t)| \leq \limsup_{t \to \infty} |p_\ell(t)| = |p^0_\mathrm{ave}|, \quad k \in [N],
	\] 
	there exists a constant $P_m$ satisfying
	\[
		\liminf_{t \to \infty}\min_{i \in [N]}|p_i(t)| < P_m < |p^0_\mathrm{ave}|,
	\]
	and there exists a time sequence $\{s_n\}_{n \in \mathbb{N}}$ such that
	\[
		0<s_1<s_2<\cdots, \quad \lim_{n \to \infty}s_n = \infty, \quad \min_{i \in [N]}|p_i(s_n)| < P_m.
	\]
	This leads to
	\[
		|p_\mathrm{ave}^0| = \frac{1}{N}\sum_{i=1}^N \lim_{n \to \infty}|p_i(s_n)| \leq \frac{1}{N}P_m + \lim_{n \to \infty}\frac{N-1}{N}|p_\ell(s_n)|
		< |p_\mathrm{ave}^0| 
	\]
	which yields a contradiction, verifying the claim \eqref{A-13}. Finally, since flocking does not happen, from Theorem \ref{T2.1} we have
	\[
		\|P(t_n)\| \geq \frac{\mathcal{M}\kappa}{M_{G'}}\int_{\|Q(t_n)\|}^{+\infty}\psi(s)\,ds
	\]
	for any $t_n$. As $\|Q(t_n)\|$ is uniformly bounded in $n$ from the definition of $t_n$, let $Q_M< \infty$ be its upper bound. Since $\|P(t_n)\|$ converges to zero from \eqref{A-13}, we obtain
	\[
		0 < \frac{\mathcal{M}\kappa}{M_{G'}}\int_{Q_M}^{+\infty}\psi(s)\,ds
		\leq \frac{\mathcal{M}\kappa}{M_{G'}}\int_{\|Q(t_n)\|}^{+\infty}\psi(s)\,ds
		\leq \|P(t_n)\| \xrightarrow{n \to \infty} 0,
	\]
	and this completes the proof by contradiction.
	\end{proof}

\begin{proof}[Proof of Proposition \ref{P2.2}]
	Clearly, (1) implies (2). Suppose that (2) holds. From Lemma \ref{L2.1}, we have
	\[
		\frac{d}{dt} \| P \|_A \leq
		-\frac{\kappa\mathcal{M}l}{N}\psi(\|Q\|_A)\|P\|_A
		+\frac{4\kappa P^0_M M_{G'} l (N-l)}{N}\max_{\substack{i' \in A \\ j' \in B}}{\psi_{i'j'}}.
	\]
	From the assumptions on (2) and Lemma \ref{L2.2}, we have
	\[
		\inf_{t \in \bbr_+}\left( \frac{\kappa\mathcal{M}l}{N}\psi(\|Q(t)\|_A) \right) \geq C_1 > 0,
		\quad \lim_{t \to \infty}\max_{\substack{i' \in A \\ j' \in B}}{\psi_{i'j'}} = 0,
	\]
	for some positive constant $C_1$. Therefore for any $\varepsilon > 0$, there exists a time $T$ satisfying
	\[
		\frac{d}{dt} \| P(t) \|_A \leq
		-C_1\|P(t)\|_A + \varepsilon, \quad \forall t > T(\varepsilon) > 0.
	\]
	Then by the comparison principle, we have
	\[
		0 \leq \limsup_{t \to \infty}\|P\|_A \leq \frac{\varepsilon}{C_1}.
	\]
	Since the choice of $\varepsilon > 0$ is arbitrary, we conclude $\|P(t)\|_A \xrightarrow{t \to \infty} 0$. The proof of $\|P(t)\|_B \xrightarrow{t \to \infty} 0$ is similar.
\end{proof}

	\begin{example} 
	
	In this example, we briefly sketch an example that achieves a bi-cluster flocking. For convenience, let $G=\mathrm{Id}$ so that $m_{G'}=M_{G'}=\mathcal{M}=1$. Suppose that $D_Q^0 \neq 0$. Then from \cite[Theorem 5.1]{CHHJK}, under some well-prepared initial configuration, there exists a set of indices $[l]$ (after reordering), which is a nonempty proper subset of $[N]$, and a positive constant $C$ such that
	\[
		\min_{i \in [l] ~ j \notin [l]}|q_i(t)-q_j(t)| \geq Ct.
	\]
	This leads to
	\[
		\int_0^t \max_{i \in [l], j \notin [l]}\psi(q_{i}(s)-q_{j}(s)) ds
		\leq \int_0^t \psi(Cs) ds \leq \frac{\|\psi\|_{L^1(\bbr_+)}}{C}.
	\]
	Then, integrating the second estimate in Lemma \ref{L2.1} leads to
	\[
		\|P(t)\|_{[l]} + \frac{\kappa l}{N}\int_{\|Q^0\|_{[l]}}^{\|Q(t)\|_{[l]}}\psi(r)dr
		\leq \|P^0\|_{[l]} + \frac{4\kappa P^0_M l (N-l)\|\psi\|_{L^1(\bbr_+)}}{CN}.
	\]
	Therefore, if a velocity deviation in a group $[l]$ is small and $C$ is sufficiently large in the sense that
	\[
		\frac{N}{\kappa l}\|P^0\|_{[l]} + \frac{4 P^0_M (N-l)\|\psi\|_{L^1(\bbr_+)}}{C} < \int_{\|Q^0\|_{[l]}}^{\infty}\psi(r)dr,
	\]
	then we have $\sup_{t \in \bbr_+}\|Q(t)\|_{[l]} < \infty$. Similarly, we have $\sup_{t \in \bbr_+}\|Q(t)\|_{[N]-[l]} < \infty$ for large $C>0$, and this implies bi-cluster flocking. Note that $C$ can be chosen sufficiently large for a suitable choice of initial data. For the detail, we refer to \cite{CHHJK}.

	\end{example}


\section{Analysis under weakly singular communications on the real line}\label{sec:3} 
\setcounter{equation}{0}

In this section, we consider the kernel of form
\[
	\psi(x)=\frac{1}{|x|^\alpha}, \quad \alpha \in (0,1).
\]
In this case, particles may collide (for the colliding example, see \cite{BHK}) and the vector field blows up. The description of such a solution is not straightforward, as provided in the following Definition and Theorem  \cite{P15,P14}.

\begin{definition}\label{D3.1}
Let $B_i$ and $\psi_n$ be defined as
\[
	B_i(t) := \{ k \in [N] : x_k(t) \neq x_i(t) ~ \text{or} ~ v_k(t) \neq v_i(t) \},
\]
\[
	\psi_n(s) :=
	\begin{cases}
		\psi(s) \quad &\mathrm{if} \quad s \geq (n-1)^{-\frac{1}{\alpha}} \\
		\mathrm{smooth ~ and ~ monotone} \quad &\mathrm{if} \quad n^{-\frac{1}{\alpha}} \leq s \leq (n-1)^{-\frac{1}{\alpha}} \\
		n \quad &\mathrm{if} \quad s \leq n^{-\frac{1}{\alpha}}
	\end{cases}
\]
and let $0=T_0 \leq T_1 \leq T_{N_s}$ be the set of all times of sticking (i.e. $x_i(t)-x_j(t)=v_i(t)-v_j(t)$ for some $i$) and $T_{N_s+1}:=T$ be a given positive number. For $n \in \{0,\cdots,N_s\}$, on each interval $[T_n,T_{n+1}]$, consider the problem
\begin{align}\label{B-1}
\begin{cases}
	\dot{x}_i = v_i, \\
	\dot{v}_i = \frac{1}{N}\sum_{k \in B_i(T_n)}(v_k-v_i)\psi_n(|x_k-x_i), \\
	x_i \equiv x_j \quad \mathrm{if} \quad j \notin B_i(T_n),
\end{cases}
\end{align}
for $t \in [T_n,T_{n+1}]$, with initial data $x(T_n),v(T_n)$. We say that $(x,v)$ solve \eqref{B-1} on the time interval $[0,T]$ with weight $\psi(s)=s^{-\alpha}$ if and only if for all $n=0,\cdots,N_s$ and arbitrary small $\varepsilon >0$, the function $x \in (C^1([0,T]))^{Nd}$ is a weak in $(W^{2,1}([T_n,T_{n+1}-\varepsilon]))^{Nd}$ solution of \eqref{B-1}.
\end{definition}

\begin{proposition}\label{P3.1} ~
\begin{enumerate}
\item Let $\alpha \in (0, \frac{1}{2})$ be given. Then for all $T>0$ and arbitrary initial data, there exists a unique $x \in W^{2,1}([0,T]) \subset C^1([0,T])$ that solves \eqref{A-3} with communication weight $\psi(s)=\frac{1}{|x|^\alpha}$ weakly in $W^{2,1}([0,T])$.
\item Let $\alpha \in (\frac{1}{2},1)$ be given. Then there exists a unique solution in the sense of Definition \ref{D3.1}.
\end{enumerate}
\end{proposition}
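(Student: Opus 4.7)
My plan is to argue by regularization and compactness. For each $n$, replace the singular kernel by the bounded Lipschitz approximation $\psi_n$ of Definition~\ref{D3.1} and solve the resulting smooth system by Cauchy-Lipschitz, obtaining $(x^n,v^n) \in C^\infty([0,T])^{Nd}$. Applying Proposition~\ref{P2.1} to each regularized system, $\max_i|v_i^n(t)| \leq P^0_M$ uniformly in $n$ and $t$, so $\{x^n\}$ is equi-Lipschitz on $[0,T]$; Arzelà-Ascoli provides a uniformly convergent subsequence $x^n \to x$. What remains is to pass to the limit in the force term and identify $v = \dot x$ as absolutely continuous with integrable derivative.

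The crux is a uniform $L^1$-in-time bound on the approximate force. Let $r_{ij}^n(t) := |x_i^n(t) - x_j^n(t)|$; then $|\dot r_{ij}^n| \leq |v_i^n - v_j^n|$, and the dissipation identity
\[
\tfrac{1}{2}\|v^n(T)\|^2 + \frac{1}{N}\int_0^T\sum_{i,k}\psi_n(r_{ik}^n)|v_i^n - v_k^n|^2\,dt = \tfrac{1}{2}\|v^0\|^2
\]
controls weighted velocity differences. Combining this with the fundamental theorem of calculus applied to $(r_{ij}^n)^{1-\alpha}$, one shows that for $\alpha \in (0,1/2)$ the integral $\int_0^T \psi_n(r_{ij}^n)\,dt$ is bounded uniformly in $n$: the dissipation enforces sufficiently strong velocity synchronization as particles approach each other that any would-be collision is prevented from being too sharp, and the critical Hölder-type exponent relating $r_{ij}^n$ to $|v_i^n - v_j^n|$ crosses the integrability threshold exactly at $\alpha = 1/2$. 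This yields uniform $L^1$-control on $\dot v^n$, hence a.e. convergence $v^n \to v$ along a further subsequence; passing to the limit in the weak form of \eqref{A-3} gives $x \in W^{2,1}([0,T])$. Uniqueness then follows from a Grönwall estimate on $\sum_i(|x_i - \tilde x_i|^2 + |v_i - \tilde v_i|^2)$ for two solutions, in which the singular contributions cancel via the monotonicity identity of Proposition~\ref{P2.1}(4).

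For part (2) with $\alpha \in (1/2,1)$, the uniform $L^1$-bound on $\psi_n(r_{ij}^n)$ generally fails across collisions, so I would induct on the at-most $N-1$ sticking times $0 = T_0 < T_1 < \cdots < T_{N_s} \leq T$. On each interval $[T_n, T_{n+1})$, the cluster partition $\{B_i(T_n)\}$ is constant; collapsing each cluster to a representative trajectory reduces the problem to a smaller system of ``distinct'' agents on which the argument of part~(1) applies on $[T_n, T_{n+1}-\varepsilon]$ for any small $\varepsilon > 0$. The essential supporting lemma is a non-separation property built into Definition~\ref{D3.1}: if $x_i(T_n) = x_j(T_n)$ and $v_i(T_n) = v_j(T_n)$, then the two agents coincide for all $t \geq T_n$, which one verifies by observing that the strong singularity $\alpha > 1/2$ makes any positive gap $r_{ij}(t) > 0$ dissipate back to zero immediately after $T_n$. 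I expect the main technical obstacle throughout to be the delicate integrability estimate on $\psi_n(r_{ij}^n)$ and the quantitative control of velocity-distance balance near collision instants, which is exactly what distinguishes the regimes $\alpha < 1/2$ and $\alpha > 1/2$ and dictates the two different notions of solution.
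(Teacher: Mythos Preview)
The paper does not prove Proposition~\ref{P3.1}: it is quoted from Peszek's work \cite{P15,P14} as background, with no argument supplied. There is therefore no ``paper's own proof'' for your attempt to be compared against.

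That said, your sketch is broadly in the spirit of the cited references --- regularize by $\psi_n$, extract uniform bounds, pass to the limit --- but several points are loose or incorrect. First, your justification of non-separation in part~(2) is wrong: for $\alpha\in(1/2,1)$ the kernel is still \emph{weakly} singular (integrable near zero), so there is no dynamical mechanism forcing a positive gap to ``dissipate back to zero immediately''; the sticking in Definition~\ref{D3.1} is an identification imposed when position \emph{and} velocity coincide, not a collapse driven by singularity strength. Second, the uniform $L^1$ bound on $\int_0^T\psi_n(r_{ij}^n)\,dt$ for $\alpha<1/2$ is the genuine technical core of \cite{P14}, and your one-sentence gesture (``the critical H\"older-type exponent \ldots crosses the integrability threshold'') hides a substantial argument involving a decomposition of the time axis near collision instants and a quantitative lower bound on the relative velocity there; the dissipation identity alone does not yield it. Third, the uniqueness claim via a plain Gr\"onwall estimate with Proposition~\ref{P2.1}(4) is optimistic: the singular contribution does not simply cancel, and in \cite{P15} uniqueness is obtained by a separate argument exploiting the piecewise structure of the weak solution rather than a direct energy comparison.
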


Although the description of a collisional solution under a singular kernel is somewhat non-trivial, if we restrict \eqref{A-3} on the real line, we may convert it into the first-order model, and its analysis may hint at the property of a solution in the second-order model as well. In this section, we are interested in the model \eqref{A-3} on the \emph{real line}, equipped with a weakly singular kernel:
\begin{align}\label{B-2}
	\begin{cases}
		\dot{q}_i= G(p_i),\quad t>0, \quad i \in [N],\\
		\displaystyle \dot{p}_i=\frac{\kappa}{N}\sum_{k=1}^N\psi(q_k-q_i)(G(p_k)-G(p_i)) \vspace{.2cm},\\
		(q_i,p_i) \big|_{t = 0+} = (q_i^0,p_i^0), \quad p_i,q_i \in \bbr,
	\end{cases}
	\quad \psi(s)=\frac{1}{|s|^\alpha}, \quad \alpha \in (0,1).
\end{align}

If $\psi$ is regular and $(P,Q)$ is a classical solution of \eqref{B-2}, then we have a following relation:
	\[
		\int_0^t \psi(q_k(s)-q_i(s))(G(p_k(s))-G(p_i(s))) ds = \int_0^{q_k(t)-q_i(t)}\psi(s) ds.
	\]
Therefore $P$ is a solution of 
\begin{align}\label{B-3}
	\dot{q}_i = G(\nu_i + \frac{\kappa}{N}\sum_{k=1}^N \Psi(q_k-q_i)),
\end{align}
provided that two systems are coupled by the following relationship:
\begin{align}\label{B-4}
	\quad \Psi(r) := \int_0^r \psi(x) dx,
	\quad \nu_i := p_i^0 - \frac{\kappa}{N}\sum_{k=1}^N \Psi(q_k^0-q_i^0).
\end{align}
On the other hand, the converse holds; if $\Psi$ is differentiable, a solution of \eqref{B-3} is also a solution of \eqref{B-2} under \eqref{B-4}, and therefore two models are equivalent. What if $\psi$ weakly singular? In this case, we have
\begin{align}\label{B-4-1}
	\psi(q) = \frac{1}{|q|^\alpha} ~ (\alpha \in (0,1)) \quad \Longrightarrow \quad
	\Psi(q)=\int_0^q \psi(r)dr=\text{sgn}(q)\frac{|q|^{1-\alpha}}{1-\alpha}.
\end{align}
As $\Psi$ is continuous, Peano’s theorem guarantees a classical solution of \eqref{B-3}. However, this may \emph{not} be a classical solution of \eqref{B-2}, since a solution of \eqref{B-2} requires more regularity of $q_i$ than \eqref{B-3}, but the regularity of $\Psi$ (and hence regularity of $\dot{q}_i$) breaks down at the origin.

\begin{example}\label{E3.1}
	Consider a two-particle system with 
	\[
	G=\mathrm{Id}, \quad \psi(q)=\frac{1}{\sqrt{q}}, \quad \Psi(q) = 2\mathrm{sgn}(q)\sqrt{|q|}.
	\]
	Then, \eqref{B-3} is of the form
	\[
		\dot{q_1} = \nu_1 + \frac{\kappa}{2}\Psi(q_2-q_1),
		\quad \dot{q_2} = \nu_2 + \frac{\kappa}{2}\Psi(q_1-q_2).
	\]
	If we pose $\nu_1=\nu_2 = 0$ and $q_1^0 \geq q_2^0$, we have a following classical solution of \eqref{B-3}. 
	\[
		q_i =
		\begin{cases}
		\displaystyle \frac{1}{2}\left((q_1^0+q_2^0)+ \kappa^2\big(t-\frac{1}{\kappa}\sqrt{|q_1^0-q_2^0|}\big)^2\right), \quad &\text{\rm{if} $i=1$, $t < \displaystyle\frac{1}{\kappa}\sqrt{|q_1^0-q_2^0|}$,} \vspace{.2cm}\\
		\displaystyle \frac{1}{2}\left((q_1^0+q_2^0)- \kappa^2\big(t-\frac{1}{\kappa}\sqrt{|q_1^0-q_2^0|}\big)^2\right), \quad &\text{\rm{if} $i=2$, $t < \displaystyle\frac{1}{\kappa}\sqrt{|q_1^0-q_2^0|}$,} \vspace{.2cm} \\
		\displaystyle \frac{q_1^0+q_2^0}{2}, \quad &\text{\rm{if} $i=1,2$, $t \geq \displaystyle\frac{1}{\kappa}\sqrt{|q_1^0-q_2^0|}$.}
		\end{cases}
	\]
	However, since $\dot{q_i}$ is not differentiable, we cannot recover a classical solution of \eqref{B-2}.
\end{example}

The above example illustrates that two models are not equivalent under the classical regime if $\psi$ is singular; a solution of \eqref{B-3} need not be twice differentiable. Therefore, if one attempts to make two models equivalent, one needs to enlarge the concept of solution of \eqref{B-2}. It turns out Sobolev space $W^{2,1}$ is an appropriate function space, as described in the following theorem. 

\begin{theorem}\label{T3.1} Let $(P,Q)$ be a solution to \eqref{B-2}. Then the following assertions holds.
	\begin{enumerate}
		\item The model \eqref{B-2} has a unique global (weak) solution where $q_i \in W^{2,\gamma}([0,T])$ for each $i \in [N]$, $T \in \bbr_+$, and
		\[
			\gamma \in \bigg[1, \frac{1}{\max\{ 1-K, \alpha\}}\bigg), \quad 
			K := \frac{m_{G'}2^{1-2\alpha}(1-\alpha)}{NM_{G'}\alpha}.
		\]
	\item Flocking emerges unconditionally:
	\[
		\sup_{t \geq 0}\max_{i,j \in [N]}|q_i(t)-q_j(t)| < \infty,
		\quad
		\max_{i,j \in [N]}|p_i(t)-p_j(t)| \lesssim e^{-Ct}, \quad C>0.
	\]
	\end{enumerate}
\end{theorem}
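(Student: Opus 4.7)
My plan is to convert the second-order problem \eqref{B-2} into the first-order ODE \eqref{B-3} via the identity $p_i(t)=\nu_i+\tfrac{\kappa}{N}\sum_k\Psi(q_k(t)-q_i(t))$, where $\Psi(r)=\mathrm{sgn}(r)|r|^{1-\alpha}/(1-\alpha)$ is continuous on $\bbr$. Since $\Psi$ is continuous, Peano's theorem delivers a local classical solution $Q\in C^1$ of \eqref{B-3}; defining $P$ through the identity then produces the candidate weak solution of \eqref{B-2}. Global existence follows from the a priori bound $|p_i|\le P_M^0$ of Proposition \ref{P2.1}(2) (whose proof uses only $\psi\ge 0$ and therefore carries over unchanged) together with the flocking estimate of the next step.

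\textbf{Part (2): flocking.} I would mimic the Lyapunov argument of Theorem \ref{T2.1}. The functional
\[
	\mathcal{L}(t):=\frac{\mathcal{M}\kappa}{M_{G'}}\int_{\|Q^0\|}^{\|Q(t)\|}\psi(s)\,ds+\|P(t)\|
\]
is well-defined because $\psi(s)=s^{-\alpha}$ is locally integrable for $\alpha\in(0,1)$, and the computation $\dot{\mathcal{L}}\le 0$ relies only on the index-switching identity of Lemma \ref{L2.1} together with Proposition \ref{P2.1}(4), neither of which needs $\psi\in L^\infty\cap C^{0,1}$. Since $\int_a^\infty s^{-\alpha}\,ds=\infty$ for every $\alpha\in(0,1)$, the sufficient condition of Theorem \ref{T2.1}(2) is satisfied unconditionally, yielding $\sup_t D_Q<\infty$; the equivalence in Theorem \ref{T2.1}(1) then produces $D_P(t)\lesssim e^{-Ct}$.

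\textbf{Part (1): regularity.} Differentiating gives $\ddot{q}_i=G'(p_i)\dot{p}_i$ with $\dot{p}_i=\tfrac{\kappa}{N}\sum_k\psi_{ki}(G(p_k)-G(p_i))$; since $|G'|\le M_{G'}$ it suffices to bound $\dot p_i$ in $L^\gamma$, and away from sticking times $\psi_{ki}$ is bounded. Near a sticking time $T_*$ of two particles $i,k$, I would extract the dominant contribution to $p_k-p_i$ from the first-order formula: by the oddness of $\Psi$ and boundedness of the remaining $N-2$ terms,
\[
	|p_k-p_i|\le\tfrac{2\kappa}{N(1-\alpha)}r^{1-\alpha}+C,\qquad r:=|q_k-q_i|.
\]
Combined with Proposition \ref{P2.1}(3) and (4) on the real line (no convexity assumption is needed, per the remark following Proposition \ref{P2.1}), this yields a two-sided control of $|\dot r|$ of the form $\sim r^{1-\alpha}$, from which one extracts a lower bound $r(t)\ge c(T_*-t)^{1/\sigma}$ where $\sigma=\sigma(K,\alpha)$ interpolates between $\alpha$ (bounded-forcing regime, $r$ shrinks essentially linearly) and $1-K$ (singular-forcing regime). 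The factor $2^{1-2\alpha}$ and the ratio $m_{G'}/M_{G'}$ appearing in $K$ arise from the precise comparison of the upper bound on $|\dot r|$ (controlled by $M_{G'}$) against the lower bound (controlled by $m_{G'}$), evaluated at the midpoint between the colliding particles. Inserting this into $|\ddot q_i|\lesssim r^{-\alpha}(r^{1-\alpha}+1)$ and integrating over $[0,T]$ delivers the stated range $\gamma\in[1,1/\max\{1-K,\alpha\})$.

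\textbf{Uniqueness and main obstacle.} Uniqueness of the weak solution reduces via the equivalence above to uniqueness for \eqref{B-3}. Although $\Psi$ is only $(1-\alpha)$-Hölder, the vector field in \eqref{B-3} is cooperative — $\partial_{q_j}\dot q_i\ge 0$ for $j\ne i$ and $\partial_{q_i}\dot q_i\le 0$, since $\Psi'=\psi\ge 0$ — so a standard comparison argument on $\sum_i(q_i-\tilde q_i)^2$ for two candidate solutions forces $Q\equiv\tilde Q$. The hard part is the sharp regularity exponent: a crude two-body estimate delivers only $\gamma<1/\alpha$, and pulling out the competing bound $\gamma<1/(1-K)$ (which becomes the binding constraint when $\alpha$ is small relative to $K$) requires tracking the worst-case placement of the remaining $N-2$ particles and the exact balance between the singular attraction and the $G$-induced damping quantified by $K$.
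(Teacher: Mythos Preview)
Your reduction to the first-order system \eqref{B-3}, the Peano existence, the cooperative/comparison argument for uniqueness, and the Lyapunov treatment of flocking are all on the right track and match the paper's strategy (Proposition \ref{P3.3} and the end of the proof of Theorem \ref{T3.1}). One point to be careful about in Part (2): the inequality $\dot{\mathcal L}\le 0$ is a differential inequality and requires $\|P\|$ to be differentiable, which only holds away from the finite set $\mathcal T$ of collision/sticking times. The paper handles this by observing that on each interval $(t_k+\varepsilon,t_{k+1}-\varepsilon)$ the trajectory coincides with that of a system with a \emph{regular} truncated kernel $\tilde\psi$, so Lemma \ref{L2.1} and Theorem \ref{T2.1} apply verbatim there; you should make this regularization step explicit rather than assert that Lemma \ref{L2.1} ``does not need $\psi\in L^\infty\cap C^{0,1}$''.

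The genuine gap is in your regularity argument. Your bound
\[
	|p_k-p_i|\le \frac{2\kappa}{N(1-\alpha)}r^{1-\alpha}+C,\qquad r=|q_k-q_i|,
\]
with a \emph{constant} $C$ is fatal near a sticking time: it feeds into $|\ddot q_i|\lesssim \psi(r)\,|p_k-p_i|\lesssim C\,r^{-\alpha}$, and since (by your own two-sided $|\dot r|\sim r^{1-\alpha}$ control) $r(T_*-\varepsilon)\sim \varepsilon^{1/\alpha}$, this gives $|\ddot q_i|\gtrsim \varepsilon^{-1}$, which is not in $L^1$. In fact $\nu_k=\nu_i$ for a sticking pair, so the ``$+C$'' should be $O(r)$; but even with that correction you obtain $|\ddot q_i|\lesssim r^{1-2\alpha}\sim \varepsilon^{1/\alpha-2}$, which yields only $\gamma<\alpha/(2\alpha-1)$ for $\alpha>1/2$ and no $K$-dependence at all. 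Your assertion that ``$\sigma$ interpolates between $\alpha$ and $1-K$'' is not derived from anything in your sketch: from $|\dot r|\sim r^{1-\alpha}$ one gets only the exponent $1/\alpha$ for $r$, never $1/(1-K)$.

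The mechanism producing $K$ in the paper is different and is the step you are missing. First one pins down $r(T_*-\varepsilon)\sim \varepsilon^{1/\alpha}$ with explicit constants $D_1,D_2$ (Lemma \ref{L3.1}); in particular $\psi(\|Q\|_{[l]})\gtrsim \varepsilon^{-1}$. Then the \emph{subsystem} estimate of Lemma \ref{L2.1} is applied to the sticking cluster $[l]$, yielding a linear ODE of the form
\[
	\frac{d}{dt}\|P(T_*-\delta)\|_{[l]}\le -K_1\,\delta^{-1}\|P(T_*-\delta)\|_{[l]}+K_2\,\delta^{-1-\alpha+1/\alpha},
\]
with $K_1=\kappa m_{G'}l^{1-\alpha}D_2^{-\alpha}/N$. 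Gr\"onwall on this inequality gives $\|P(T_*-\varepsilon)\|_{[l]}\lesssim \varepsilon^{K_1}$, and hence $\psi(r)\,|G(p_k)-G(p_i)|\lesssim \varepsilon^{K_1-1}$, which is the source of the threshold $1-K$. The constant $K$ in the statement is the worst case $K_1$ over $l\ge 2$, which is where the factor $2^{1-2\alpha}$ and the ratio $m_{G'}/M_{G'}$ come from (via $D_2$), not from any ``midpoint'' computation. The competing bound $\gamma<1/\alpha$ comes from \emph{collisions} (where $|p_k-p_i|$ stays bounded away from zero and $r\sim\varepsilon$), not from the sticking analysis.
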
 \vspace{.5cm}


\begin{remark}\label{R3.1} Below, we list some comments about Theorem \ref{T3.1}.
\begin{enumerate}
	\item Theorem \ref{T3.1} states that $q_i$ is always continuously differentiable and $p_i$ is always differentiable almost everywhere, and the regularity of $p_i$ improves as $\alpha$ decreases. Roughly speaking, when $\alpha$ is close to 1, then $p_i$ is close to an absolutely continuous function, and when $\alpha$ is close to 0, then $p_i$ is close a to Lipschitz continuous function. In fact, if only sticking happens and collision does not occur (see Definition \ref{D3.2}), then $p_i$ can be indeed Lipschitz, as described in Example \ref{E3.1}.
	\item	Equivalence between \eqref{B-2} and \eqref{B-3} is not trivial for a singular kernel. When the kernel $\psi$ is regular, equivalence is essentially based on the following change of variable formula:
	\[
		\int_0^t \psi(q_k(s)-q_i(s))(G(p_k(s))-G(p_i(s))) ds = \int_0^{q_k(t)-q_i(t)}\psi(s) ds.
	\]
The above formula holds if $\psi$ is continuous and $t \mapsto q_k(t)-q_i(t)$ is continuously differentiable. However, if $\psi$ is merely a nonnegative measurable function, even if $t \mapsto q_k(t)-q_i(t)$ is absolutely continuous, change of variable formula requires either monotonicity of $q_k-q_i$ or integrability of $\psi$ and $\psi(q_k(s)-q_i(s))(G(p_k(s))-G(p_i(s)))$ (see Lemma \ref{L3.2}). Therefore, due to the possibility of pathological behavior near a collision time, a change of variable formula cannot be applied directly.
	\item	In \cite{MP18}, the authors provided a framework to rigorously derive a kinetic description of the model \eqref{A-3} (under $G=\mathrm{Id}$) with a weakly singular communication in a weak-atomic sense. For this, the solution should have a regularity of $W^{2,1}$ and therefore the derivation was limited to the case where $\alpha \in (0,1/2)$ (see Proposition \eqref{P3.1}). Theorem \ref{T3.1} states that a weak-atomic solution can be derived for any $\alpha \in (0,1)$ on the real line.
\end{enumerate}
\end{remark}

Before we establish the equivalence between \eqref{B-2} and \eqref{B-3}, we first review the dynamics of \eqref{B-3}. Recall that
 \[
	 \mathcal{N}:=(\nu_1,\nu_2,\cdots,\nu_N), 
	 \quad \Psi(r) := \int_0^r \psi(x) dx,
	\quad \nu_i := p_i^0 - \frac{\kappa}{N}\sum_{k=1}^N \Psi(q_k^0-q_i^0).
\]

\begin{proposition}\label{P3.2}\cite{BHK}
Suppose that communication weight has weak singularity of the following form:
\[ \psi(q)=\frac{1}{|q|^\alpha}, \quad  0 < \alpha < 1,\quad q \neq 0, \]
and let $Q$ be a solution to \eqref{B-3} with initial data $(Q^0,\mathcal{N})$. For fixed indices $i$ and $j ~ (i \neq j)$, suppose that 
 \[ q_{i}^0>q_j^0. \]
 Then the following trichotomy holds.
	\begin{enumerate}
	\item If $\nu_i>\nu_j$, then $q_i$ and $q_j$ will not collide in finite time:
	\[ q_i(t)>q_j(t) \quad \mbox{for all $~~t\ge0$}. \]
	\item If $\nu_i<\nu_j$, then $q_i$ and $q_j$ will collide exactly once, i.e., there exists a time $t^*$ such that 
	\[ q_i(t)>q_j(t) \quad \mbox{for $~~0\le t < t^*$}, \quad  q_i(t^*)=q_j(t^*) \quad \mbox{and} \quad  q_i(t)<q_j(t) \quad \mbox{for $t>t^*$}. \]
	\item If $\nu_i=\nu_j$, then $q_i$ and $q_j$ will collide in finite time,  and two particles will stick together after their first collision. 
	\item If $\nu_i \neq \nu_j$, then we have
	\[
		\liminf_{t \to \infty}|q_i(t)-q_j(t)| > 0.
	\] 
	\end{enumerate}
\end{proposition}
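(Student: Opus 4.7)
The plan is to track the scalar $\ell(t) := q_i(t)-q_j(t)$ together with the ``reduced phase'' $\Phi_k := \nu_k + \frac{\kappa}{N}\sum_m \Psi(q_m-q_k)$, for which \eqref{B-3} reads $\dot q_k = G(\Phi_k)$ and hence $\dot\ell = G(\Phi_i)-G(\Phi_j)$. Since $G$ is strictly increasing on $\bbr$ with derivative bounded below by $m_{G'}$ on any bounded set, both the sign and the size of $\dot\ell$ are controlled by the decisive identity
\[
\Phi_i-\Phi_j \;=\; (\nu_i-\nu_j) \;+\; \frac{\kappa}{N}\sum_{k=1}^N \bigl[\Psi(q_k-q_i)-\Psi(q_k-q_j)\bigr].
\]
Because $\Psi$ is strictly increasing, the bracketed quantity has the opposite sign of $\ell$, and isolating the $k=i,j$ contributions yields the refined estimate $\Phi_i-\Phi_j \leq (\nu_i-\nu_j) - \frac{2\kappa}{N(1-\alpha)}|\ell|^{1-\alpha}\,\mathrm{sgn}(\ell)$. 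All four assertions will be extracted by exploiting this identity near $\ell=0$.

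Cases~(1) and~(2) are pure sign arguments. For (1), at any hypothetical first zero $t^*$ of $\ell$ the bracketed sum vanishes (since $q_k-q_i(t^*)=q_k-q_j(t^*)$ for every $k$), so $\Phi_i(t^*)-\Phi_j(t^*)=\nu_i-\nu_j>0$ and $\dot\ell(t^*)>0$, contradicting $\ell$ approaching $0$ from above. For (2), the identity gives $\Phi_i-\Phi_j\leq \nu_i-\nu_j<0$ as long as $\ell>0$, hence $\dot\ell\leq m_{G'}(\nu_i-\nu_j)<0$, so $\ell$ reaches $0$ at a finite $t^*$; applying the same argument with $i,j$ interchanged rules out a second zero, since at any supposed later crossing from below one would again have $\Phi_i-\Phi_j=\nu_i-\nu_j<0$, inconsistent with $\ell$ returning to $0$ from the negative side.

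For Case~(3), setting $\nu_i=\nu_j$ and using $\Psi(\ell)=\ell^{1-\alpha}/(1-\alpha)$, the refined bound upgrades to $\dot\ell\leq -\tfrac{2m_{G'}\kappa}{N(1-\alpha)}\ell^{1-\alpha}$ on $\{\ell>0\}$, which is separable and integrates to $\ell(t)^\alpha\leq \ell(0)^\alpha - Ct$, producing collision at a finite $t^*$. The main obstacle of the whole proposition lies in the subsequent \emph{sticking}: the reduced ODE near $\ell=0$ is of type $\dot\ell\sim -c|\ell|^{1-\alpha}\mathrm{sgn}(\ell)$, which admits a non-sticking branch, so the correct continuation must be selected by something beyond the bare ODE. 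I would establish sticking by an identification argument: at $t^*$ one has both $q_i(t^*)=q_j(t^*)$ and $\Phi_i(t^*)=\Phi_j(t^*)$, hence the configuration with $q_i\equiv q_j$ imposed satisfies the reduced system classically, and one shows it agrees with the true continuation by a regularization/uniqueness argument (for instance via the smoothed kernels $\psi_n$ of Definition~\ref{D3.1}).

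Case~(4) follows from Cases~(1)/(2) plus a trap argument. Taking $\nu_i>\nu_j$ for definiteness, $\ell(t)>0$ for all $t$ by (1). As $\ell\to 0^+$ the singular contribution to the sum is $O(\ell^{1-\alpha})$, while --- provided every other agent remains a distance $\delta>0$ from $\{q_i,q_j\}$ --- the remaining summands are $O(\ell)$, so $\Phi_i-\Phi_j\to \nu_i-\nu_j>0$. Hence there exist $L_0,c_0>0$ with $\ell\leq L_0 \Rightarrow \dot\ell\geq c_0$; any first descent of $\ell$ through $L_0/2$ from above would force $\dot\ell\leq 0$ there, contradicting $\dot\ell\geq c_0$, so $\liminf_{t\to\infty}\ell\geq L_0/2>0$. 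The case $\nu_i<\nu_j$ is symmetric after the collision of Case~(2); the only remaining technicality is securing the uniform-in-time separation of every other agent from $\{q_i,q_j\}$, which would be handled by a simultaneous bootstrap over all index pairs using the trichotomy recursively.
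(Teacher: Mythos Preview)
The paper does not prove this proposition; it is quoted from \cite{BHK} without argument. Your route via the identity
\[
\Phi_i-\Phi_j \;=\; (\nu_i-\nu_j)\;+\;\frac{\kappa}{N}\sum_{k}\bigl[\Psi(q_k-q_i)-\Psi(q_k-q_j)\bigr]
\]
and the monotonicity of $\Psi$ is the natural one, and your treatment of (1), (2), and the finite-time collision in (3) is correct.

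You overcomplicate the two points you flag as difficulties. For the sticking in~(3): there is no non-sticking branch to rule out. With $\nu_i=\nu_j$ every summand above has the sign of $-\ell$, so $\Phi_i-\Phi_j$ and $\ell$ have opposite signs, hence $\ell\,\dot\ell=\ell\,(G(\Phi_i)-G(\Phi_j))\leq 0$ for all $t$. Thus $\ell^2$ is nonincreasing, and from $\ell(t^*)=0$ one gets $\ell\equiv 0$ on $[t^*,\infty)$ directly. The scalar model $\dot x=-c|x|^{1-\alpha}\mathrm{sgn}(x)$ you invoke is the \emph{attractive} case, whose only forward continuation from $x=0$ is $x\equiv 0$; the non-uniqueness you have in mind belongs to the repulsive sign $\dot x=+c|x|^{1-\alpha}\mathrm{sgn}(x)$. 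No regularization or external uniqueness result is needed.

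For~(4), the ``simultaneous bootstrap'' to separate the other agents from $\{q_i,q_j\}$ is unnecessary. The antiderivative $\Psi(r)=\mathrm{sgn}(r)\,|r|^{1-\alpha}/(1-\alpha)$ is globally $(1-\alpha)$-H\"older, so
\[
\bigl|\Psi(q_k-q_i)-\Psi(q_k-q_j)\bigr|\;\leq\;\frac{2}{1-\alpha}\,|\ell|^{1-\alpha}
\]
for \emph{every} $k$, uniformly in $t$ and in the position of $q_k$. Summing gives $|\Phi_i-\Phi_j-(\nu_i-\nu_j)|\leq \tfrac{2\kappa}{1-\alpha}|\ell|^{1-\alpha}$, and your trap argument then yields $\liminf_{t\to\infty}|\ell|>0$ with no recursion. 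The only genuine point you leave implicit is that the quantitative bound $\dot\ell\geq c_0$ requires $G'\geq m_{G'}>0$ on the range of the $\Phi_k$; this holds because, away from the finitely many collision times, $\dot\Phi_k=\tfrac{\kappa}{N}\sum_m\psi_{mk}(G(\Phi_m)-G(\Phi_k))$, so the argument of Proposition~\ref{P2.1}(2) gives $\max_k|\Phi_k(t)|\leq\max_k|p_k^0|=P_M^0$.
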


Therefore, two particles $p_i$ and $p_j$ will overlap in some time unless $(p_i^0-p_j^0)(\nu_i-\nu_j)>0$, and will eventually `stick' if and only if $\nu_i=\nu_j$. It is easy to see that
\[
	\nu_i=\nu_j
	\quad \Longleftrightarrow \quad
	p_i(t)=p_j(t) \quad \text{implies} \quad \dot{p}_i(t) = \dot{p}_j(t),
\]
as illustrated in Example \ref{E3.1}. If a kernel is regular, then (1),(2), and (4) in Proposition \ref{P3.2} still hold, but (3) does not happen; agents never stick unless they are sticking at the initial state \cite{BHK,HKPZ19}. Thus such `finite-in-time sticking' characterizes a singular kernel \cite{P15,P14}. In what follows, we clarify the definition of sticking and related concepts:

\begin{definition}\label{D3.2}
	Let $Q$ be a classical solution of \eqref{B-3}. Consider two agents $q_i$ and $q_j$.
	\begin{enumerate}
		\item We say $q_i$ and $q_j$ collide at time $t$ if
		\[
			q_i(t)=q_j(t) \quad \text{but} \quad \dot{q}_i(t) \neq \dot{q}_j(t).
		\]
		\item We say $q_i$ and $q_j$ stick at time $t$ if
		\[
			q_i(t)=q_j(t) \quad \text{and} \quad \dot{q}_i(t) = \dot{q}_j(t).
		\]
	\end{enumerate}	
\end{definition}

From Proposition \ref{P3.2}, if particles stick at some instance, they stick afterwards. Therefore, if some agents stick at time $t$ among $N$ agents, the system immediately changes into a system of weighted $N'(<N)$ agents. To describe this phenomena, we define sets of collisional indices, sticking indices and their time set as follows:
 \begin{align}\begin{aligned}\label{B-5}
 	&C_i(t):=\{ j \in [N] \mid q_i ~ \text{and} ~ q_j ~ \text{collide at time} ~ t \},  \\
	&S_i(t):=\{ j \in [N] \mid q_i ~ \text{and} ~ q_j ~ \text{stick at time} ~ t \}, \\
	&\mathcal{T} := \bigcup_{i \in [N]} \left( \{ t \in \bbr_+ :  |C_i(t)| \neq 0\} \cup \{ t \in \bbr_+ :  |S_i| ~ \text{is discontinuous at} ~ t \} \right).
\end{aligned}\end{align}
 Note that $C_i$,$S_i$ and $\mathcal{T}$ depends on solution of \eqref{B-3}, and $S_i$ is discontinuous at the instance when two particles start to stick. 
 
 \begin{proposition}\label{P3.3} Suppose that an initial data $(Q^0,\mathcal{N})$ of \eqref{B-3} is given.
 	\begin{enumerate}
	\item System \eqref{B-3} has a unique global classical (i.e. $q_i(t) \in C^1(\bbr_+;\bbr)$ for each $i$) solution. In particular, $C_i(t)$,$S_i(t)$ and $\mathcal{T}$ are well defined for each $i \in [N]$ and $t \in \bbr_+$. 
	\item For each $i \in [N]$, $|C_i(t)|$ is zero for all but finitely many $t \in \bbr_+$.
	\item For each $i \in [N]$ and $0 \leq s \leq t < \infty$, we have $S_i(s) \subset S_i(t)$. In particular, $|S_i|$ is a right-continuous increasing step function.
	\item $\mathcal{T}$ has a finite cardinality.
	\end{enumerate}
 \end{proposition}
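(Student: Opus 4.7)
The plan is to prove the four assertions in order, taking the trichotomy in Proposition \ref{P3.2} as the main structural input.

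For (1), local existence follows from Peano's theorem since the primitive $\Psi$ in \eqref{B-4-1} is continuous across the origin, making the vector field of \eqref{B-3} continuous in $Q$. On any interval where all particles are pairwise distinct, the vector field is smooth and Cauchy--Lipschitz gives uniqueness. At each coincidence instant, the trichotomy of Proposition \ref{P3.2} prescribes the unique continuation: a pair with $\nu_i\neq\nu_j$ simply crosses (case (2)), while a pair with $\nu_i=\nu_j$ sticks (case (3)), at which point I identify the two coincident particles as one effective agent carrying the shared $\nu$-value and iterate on a system with one fewer degree of freedom. Each such event either consumes one of at most $\binom{N}{2}$ possible crossings or merges two clusters (at most $N-1$ times), so this inductive construction terminates after finitely many steps and yields a unique global $C^1$ solution on $\bbr_+$. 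Globality is secured by setting $p_i(t):=\nu_i+\frac{\kappa}{N}\sum_k \Psi(q_k(t)-q_i(t))$ so that $p_i$ is continuous in $t$ and satisfies \eqref{B-2} off the finite collision set; Proposition \ref{P2.1} then gives $|p_i|\leq P^0_M$ on a dense open set and, by continuity, everywhere, whence $|\dot{q}_i|=|G(p_i)|\leq g(P^0_M)$.

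For (2), Proposition \ref{P3.2}(2) says each pair $(i,j)$ with $\nu_i\neq\nu_j$ collides at most once, while pairs with $\nu_i=\nu_j$ stick and therefore do not contribute to $C_i$. Hence $\{t\in\bbr_+:|C_i(t)|\neq 0\}$ has cardinality at most $N-1$.

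For (3), if $j\in S_i(s)$ then $q_i(s)=q_j(s)$ and $\dot{q}_i(s)=\dot{q}_j(s)$; evaluating \eqref{B-3} at time $s$ and using the strict monotonicity of $G$ forces $\nu_i=\nu_j$. With equal $\nu$-values and coincident positions at time $s$, the merged-particle construction from (1) is valid and the uniqueness established there gives $q_i(t)=q_j(t)$ for all $t\geq s$, so $S_i(s)\subset S_i(t)$. Since $|S_i|$ is non-decreasing, integer valued and bounded above by $N-1$, it is a step function; right-continuity holds because between consecutive sticking instants $|S_i|$ is constant.

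For (4), combining (2) and (3), $\mathcal{T}$ is the union of finitely many collision instants (at most $\binom{N}{2}$ across all pairs) together with finitely many discontinuity instants of the functions $|S_i|$, each of the latter corresponding to a cluster-merging event of which there are at most $N-1$ in total. The main difficulty I anticipate is the uniqueness argument across a sticking transition, since the vector field is only H\"older continuous and standard Cauchy--Lipschitz fails at a collision; my strategy is to exploit the forced identity $\nu_i=\nu_j$ at sticking to collapse the pair into a single dynamical degree of freedom, after which the reduced system is smooth off its own coincidence set and classical uniqueness applies.
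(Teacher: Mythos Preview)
Your proposal has a genuine gap in the uniqueness argument for (1). You claim that at a coincidence instant the trichotomy of Proposition~\ref{P3.2} ``prescribes the unique continuation,'' but the trichotomy is only qualitative: it says a pair with $\nu_i\neq\nu_j$ crosses once, not that two solutions sharing the same state at the crossing time must coincide afterward. At a collision the vector field of \eqref{B-3} is merely H\"older (since $\Psi'(0^\pm)=\pm\infty$), so Cauchy--Lipschitz fails and forward uniqueness from $Q(t_c)$ needs an extra ingredient. Your collapse-and-iterate device does handle the sticking case---there Proposition~\ref{P3.2}(3) genuinely forces $q_i\equiv q_j$ for $t\ge t_S$ in \emph{any} solution---but it has no analogue for crossings, so your iterative construction produces one solution without ruling out others. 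You have in fact inverted the difficulty: sticking is the easier case, while crossings are where the non-Lipschitz issue bites.

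The paper closes this gap by a one-sided monotonicity (maximum-principle) argument that treats both cases at once. Given two solutions $Q,Q'$ with the same data, set $D(t):=\max_i|q_i(t)-q_i'(t)|$; if the maximum is realized at index $\ell$ with $q_\ell\ge q_\ell'$, then for every $k$ one has $q_k-q_\ell\le q_k'-q_\ell'$, and since $\Psi$ and $G$ are increasing this yields $\dot q_\ell-\dot q_\ell'\le 0$, hence $\dot D\le 0$ a.e.\ on each inter-event interval. The induction hypothesis $D(t_c)=0$ then propagates to $D\equiv 0$. This is precisely the one-sided Lipschitz structure that compensates for the failure of two-sided Lipschitz at coincidences, and it is the key idea missing from your outline.
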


\begin{proof} If we prove (1), then the other statements are direct consequences of Proposition \ref{P3.2} and (1). Therefore we focus on the proof of (1). The existence of a global classical solution is guaranteed by Peano's Theorem. Therefore it suffices to verify the uniqueness. Suppose that there exists two solutions $Q=(q_1,\cdots,q_N)$ and $Q'=(q_1',\cdots, q'_2)$ with same initial data $(Q^0,\mathcal{N})=(q_1^0,\cdots,q_N^0,\nu_1,\cdots,\nu_N)$, which is neither collisional nor sticking (i.e. $\prod_{\substack{i,j \in [N] \\ i \neq j}}(q_i^0-q_j^0) \neq 0$). Let $(C_i,S_i,\mathcal{T})$ and $(C_i',S_i',\mathcal{T}')$ be defined as \eqref{B-5} with respect to $Q$ and $Q'$, respectively. From Proposition \ref{P3.2}, there exists finite number of times $\{t_i\}_{i \in [M]}$ sayisfting
\begin{align*}
	[0,\infty) = \bigcup_{c=0}^M [t_c,t_{c+1}), \quad 0=t_0 < t_1 < \cdots <t_M=+\infty, \quad M<\infty,
\end{align*}
	such that $\{t_1,\cdots,t_{M-1}\}=\mathcal{T}$. 
	Similarly, we set $[0,\infty) = \bigcup_{c=0}^{M'} [t'_c,t'_{c+1})$ with respect to $Q'$. Now we use an induction argument to prove $t_c=t_c'$ and $Q=Q'$ on $[0,t_c)$ for each $c=1,2,\cdots,M$. \newline

	$\bullet$ ($c$=1) Since $\Psi$ is locally Lipschitz except the origin and $\Psi$ is not evaluated at $0$ in $t \in [0, \min\{t_1,t_1'\})$, the standard theory of ODE guarantees that $Q(t)=Q'(t)$ in $[0,\min\{t_1,t_1'\})$. Without loss of generality, suppose that $t_1 \leq t_1'$. Since we have global existence of a classical solution, both of $Q$ and $Q'$ uniquely extends to $[0,t_1]$ and they are same. In particular, if $q_i$ and $q_j$ collide or starts to stick at $t_1$, then so are $q_i'$ and $q_j'$. Therefore we have $t_1=t_1'$ and $Q=Q'$ in $[0,t_1]=[0,t_1']$. \newline
	
	$\bullet$ (Inductive step) Suppose $t_n=t'_n$ for $n=1,2,\cdots,c$ and assume that solution is unique in $[0,t_c)$, so that $S_k(t)$ and $C_k(t)$ are well defined for each $k \in [N]$ in $t \in [0,t_c)$. We claim that
	\begin{center}
	for any $t^* \in [t_c, \min\{t_{c+1},t'_{c+1}\})$, we have $Q=Q'$ in $[0,t^*]$.
	\end{center}
	To prove this by contradiction, suppose that
	\begin{align}\label{B-6}
		Q(T) \neq Q'(T) \quad \text{for some} ~ T \in (t_c,t^*].
	\end{align}
	Let $D(t):=\max_{i \in [N]}|q_i(t)-q'_i(t)|$ and $M=M(t)$ be a time-dependent index satisfying $D(t)=|q_M(t)-q'_M(t)|$. Let time $t \in (t_c,t^*]$ and index $M(t)=\ell$ be fixed. If we assume, without loss of generality, that $q_\ell(t) \geq q'_\ell(t)$, then we have
	\begin{align*}
		\dot{q}_\ell(t) - \dot{q}'_\ell(t)
		&= G(\nu_i + \frac{\kappa}{N}\sum_{k=1}^N \Psi(q_k(t)-q_\ell(t))) - G(\nu_i + \frac{\kappa}{N}\sum_{k=1}^N \Psi(q'_k(t)-q'_\ell(t))) \\
		&= \frac{\kappa G'(\tilde{q}_i)}{N}\sum_{k=1}^N
	\left( \Psi(q_k(t)-q_\ell(t)) - \Psi(q'_k(t) - q'_\ell(t)) \right), \quad t \in (t_c + \varepsilon, t^*],
	\end{align*}
where we used the mean value theorem for the last equality. Then definition of $M$ yields
\begin{center}
	$q_k(t)-q'_k(t) \leq q_\ell(t)-q'_\ell(t) ~ \Longleftrightarrow ~ q_k(t) - q_\ell(t) \leq q'_k(t) - q'_\ell(t)$.
\end{center}
As $\Psi$ is increasing, we have $\dot{q}_\ell(t) - \dot{q}'_\ell(t) \leq 0$. From the existence of a global solution, each $\dot{q}_i$ and $\dot{q}'_i$ are uniformly bounded in $[t_c,t^*)$. Therefore we have $\dot{D}(t) \leq 0$ for almost every $t \in [t_c,t^*)$, and
\[
	0 \leq {D}(t^*) \leq {D}(t_{c})=0,
\]
where the equality comes from the induction hypothesis. Therefore, by the same argument as in the $c=1$ case, we have $t_{c+1}=t'_{c+1}$ and $D(t) \equiv 0$ on $(t_{c},t_{c+1}]$. By induction, we conclude $D \equiv 0$ on $(t_c,t^*]$. This contradicts \eqref{B-6}, which completes the proof for not overlapping initial data. The proof for a collisional or sticking initial data follows by letting $t_1=0$.
\end{proof}

	\begin{lemma}\label{L3.1} Let $(P,\mathcal{N})$ be a solution to \eqref{B-3} with a communication of the form \eqref{B-4-1}. For sufficiently small $\varepsilon > 0$, we have the following assertions. 
		\begin{enumerate}
			\item If $p_i$ and $p_j$ collide at $T>0$, then
			\[
				C\varepsilon \leq |q_i(T \pm \varepsilon)-q_j(T \pm \varepsilon)|,
			\]
			for some positive constant $C>0$.
			\item If $p_i$ and $p_j$ stick at $T>0$,
			\[
				D_1\varepsilon^{\frac{1}{\alpha}} \leq |q_i(T-\varepsilon)-q_j(T-\varepsilon)| \leq D_2\varepsilon^{\frac{1}{\alpha}},
			\]
			where
			\[
				D_1 = \left(\frac{2\kappa m_{G'}\alpha}{N(1-\alpha)}\right)^{\frac{1}{\alpha}},
				\quad D_2 = \left( \frac{\kappa M_{G'}2^\alpha \alpha}{1-\alpha} \right)^{\frac{1}{\alpha}}.
			\]
		\end{enumerate}
	\end{lemma}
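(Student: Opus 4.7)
The plan is to treat the two assertions separately, both by analyzing $r(t):=q_i(t)-q_j(t)$ in a one- or two-sided neighborhood of $T$ and exploiting the scalar structure of \eqref{B-3}.

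For assertion (1), a first-order expansion suffices. At a collision time one has $q_i(T)=q_j(T)$ but $\dot q_i(T)\ne\dot q_j(T)$; since $G\in C^1$ and $\Psi$ is continuous at $0$ for $\alpha\in(0,1)$, the map $t\mapsto G(\nu_\ell+\tfrac{\kappa}{N}\sum_k\Psi(q_k(t)-q_\ell(t)))$ is continuous at $T$, so $|\dot q_i(t)-\dot q_j(t)|\ge c:=\tfrac12|\dot q_i(T)-\dot q_j(T)|>0$ on a two-sided neighborhood of $T$. Integrating from $T$ to $T\pm\varepsilon$ and using $q_i(T)=q_j(T)$ then gives $|q_i(T\pm\varepsilon)-q_j(T\pm\varepsilon)|\ge c\varepsilon$.

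For the lower bound in assertion (2), my plan is to reduce to a Bernoulli-type ODE inequality. Without loss of generality take $r(t)>0$ on $(T-\delta,T)$. Sticking forces $\nu_i=\nu_j$ by Proposition \ref{P3.2}(3), so the mean value theorem applied to $G$ yields
\[
\dot r(t)=G'(\eta)\,\frac{\kappa}{N}\bigg[-2\Psi(r(t))+\sum_{k\neq i,j}\bigl(\Psi(q_k-q_i)-\Psi(q_k-q_j)\bigr)\bigg],\qquad G'(\eta)\in[m_{G'},M_{G'}].
\]
Because $\Psi$ is increasing and $q_k-q_i<q_k-q_j$, every term in the tail sum is nonpositive and can be discarded to obtain $\dot r\le -\frac{2\kappa m_{G'}}{N(1-\alpha)}r^{1-\alpha}$. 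Separating variables and integrating backward from $r(T)=0$ over $[T-\varepsilon,T]$ then produces $r(T-\varepsilon)\ge D_1\varepsilon^{1/\alpha}$.

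For the matching upper bound, the decisive ingredient I plan to establish is the sharp pointwise inequality
\[
|\Psi(a)-\Psi(b)|\le 2^\alpha\,\Psi(|a-b|)\qquad\text{for all } a,b\in\bbr.
\]
For same-sign arguments this follows from subadditivity of the concave function $x\mapsto x^{1-\alpha}$ on $[0,\infty)$ with $\Psi(0)=0$; for opposite-sign arguments it reduces to the elementary fact $\sup_{t\in[0,1]}[t^{1-\alpha}+(1-t)^{1-\alpha}]=2^\alpha$, attained at $t=1/2$. Applying this termwise to all $N$ summands (including $k=i,j$) and using $|G(x)-G(y)|\le M_{G'}|x-y|$ gives $|\dot r|\le\frac{M_{G'}\kappa\,2^\alpha}{1-\alpha}\,r^{1-\alpha}$; a second separation-of-variables integration then produces $r(T-\varepsilon)\le D_2\varepsilon^{1/\alpha}$. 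The main technical nuisance I expect is to justify that these classical ODE manipulations are valid on $(T-\delta,T)$: by Proposition \ref{P3.3} the sticking time $T$ is isolated in $\mathcal{T}$, so the solution is classical on the relevant one-sided interval and $r$ is $C^1$ with $r>0$ there, which is exactly what the argument requires.
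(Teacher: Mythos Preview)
Your proposal is correct and follows essentially the same approach as the paper: part~(1) via continuity of $\dot q_i-\dot q_j$ at the collision time, and part~(2) via the Bernoulli-type inequalities $\dot r\le -C_1 r^{1-\alpha}$ and $|\dot r|\le C_2 r^{1-\alpha}$ obtained from the same two ingredients (dropping nonpositive tail terms for the lower bound, and the sharp estimate $|\Psi(a)-\Psi(b)|\le \frac{2^\alpha}{1-\alpha}|a-b|^{1-\alpha}$ for the upper bound). Your execution is in fact slightly cleaner than the paper's in two places: you integrate $\frac{d}{dt}r^\alpha$ directly rather than invoking a comparison solution plus a contradiction argument, and by using $\nu_i=\nu_j$ and monotonicity of $\Psi$ you avoid the paper's restriction to \emph{consecutive} sticking indices (which it needs to apply the mean value theorem to $\Psi$).
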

	
	\begin{proof} Throughout the proof, we set
	\[
		\mathcal{T}=\{t_1,t_2,\cdots,t_c\}, \quad t_1 < t_2 < \cdots < t_c, \quad t_0=0,
	\]
	where $\mathcal{T}$ is defined as Proposition \ref{P3.3}. \vspace{.5cm}
		
	\indent ($\bullet$ Proof of (1)). Without loss of generality, set $\nu_i > \nu_j$ and $q_j^0 > q_i^0$. Suppose that $q_i$ collide with $q_j$ at $t_C \in \mathcal{T}$. First, we use the mean value theorem to observe

\begin{align}\begin{aligned}\label{X-0}
	\frac{d}{dt} (q_i-q_j)|_{t=t_C}
	&=G\left(\nu_i + \frac{\kappa}{N}\sum_{k=1}^N \Psi(q_{k}-q_i) \right)\bigg|_{t=t_C}
	-G\left( \nu_j+ \frac{\kappa}{N}\sum_{k=1}^N \Psi(q_{k}-q_j) \right)\bigg|_{t=t_C} \\
	&=G'(y_{ij})(\nu_i-\nu_j) \geq m_{G'}(\nu_i-\nu_j)=:v_{ij}>0.
\end{aligned}\end{align}
Then from the continuity of the solution, for some $\delta>0$ we have
\[
	\frac{v_{ij}}{2} \leq G(p_{i}(t))-G(p_j(t)), \quad t \in [t_C-\delta, t_C+\delta].
\]
Therefore, as $q_{i}(t_C)-q_{j}(t_C)=0$, for any $0 < \varepsilon \leq \delta$, we obtain
\begin{align*}
	|q_i(t_C \pm \varepsilon)-q_j(t_C \pm \varepsilon)|
	\geq \left| \int_0^\varepsilon  G(p_i(t_C \pm s)) - G(p_j(t_C \pm s)) ds \right|
	\geq \frac{\varepsilon v_{ij}}{2}.
\end{align*} 
	
	($\bullet$ Proof of (2)). Suppose that, with suitable reordering of indices, $q_1, q_2, \cdots, q_{l-1}, q_l$ \emph{starts }to stick at time $t_S \in \mathcal{T}$ $(S \in [c])$ simultaneously, and set
	\[
		q_1(t) < q_2(t) < \cdots < q_{l-1}(t) < q_l(t), \quad \text{for any} ~ t \in (t_{S-1},t_S).
	\]
	Let $i:=j+1(i,j \in [l])$. We use the mean-value theorem twice to obtain
\begin{align*}
	\frac{d}{dt} (q_i-q_j)
	&=G\left(\nu_i + \frac{\kappa}{N}\sum_{k=1}^N \Psi(q_{k}-q_i) \right)
	-G\left( \nu_j+ \frac{\kappa}{N}\sum_{k=1}^N \Psi(q_{k}-q_j) \right) \\
	&=\frac{\kappa G'(y_{ij})}{N}\left(\sum_{k=1}^N \big( \Psi(q_k-q_i)-\Psi(q_k-q_j) \big) \right)\\
	&=\frac{\kappa G'(y_{ij})}{N}\left(\sum_{k=1}^N \psi(z_{ijk})(q_j-q_i)\right) \\
	&= - \left(  \frac{\kappa G'(y_{ij})}{N}\sum_{k=1}^N\psi(z_{ijk}) \right) (q_i-q_j),
\end{align*}
where $z_{ijk}$ is located between $q_k-q_i$ and $q_k-q_j$. Note that the second mean value theorem is valid since $i$ and $j$ are consecutive, so that $\Psi$ is differentiable in the interval $(q_k-q_i, q_k-q_j)$. In particular, for $k \in \{i,j\}$, $z_{ijk}$ is specifically 
\[
	\psi(z_{ijj})=\psi(z_{iji})=\frac{\Psi(q_i-q_j)}{q_i-q_j}=\frac{1}{(1-\alpha)(q_i-q_j)^\alpha}.
\]
Therefore, we have
\[
	\frac{d}{dt}(q_i-q_j) \leq -\frac{\kappa m_{G'}}{N}(\psi(z_ {ijj})+\psi(z_{iji}))(q_i-q_j) = -C_1(q_i-q_j)^{1-\alpha}, 
	\quad C_1 := \frac{2\kappa m_{G'}}{N(1-\alpha)},
\]
and $C_1>0$ is independent of initial data. Now we recall the following ODE:
\[
	\dot{x} = -C_1x^{1-\alpha}, \quad x(0)=x^0 > 0
	\quad \Longrightarrow \quad
	x(t) =
		(C_1\alpha)^{\frac{1}{\alpha}}\left(\frac{(x^0)^\alpha}{C_1\alpha}-t\right)^{\frac{1}{\alpha}}, \quad  t \in \left( 0, \frac{(x^0)^\alpha}{C_1\alpha}\right). \\
\]
Then, by the comparison principle, for any sufficiently small $\delta > 0$, we have
\begin{align}\label{X-1}
	q_i(t_S-\varepsilon)-q_j(t_S-\varepsilon)
	\leq (C_1\alpha)^{\frac{1}{\alpha}}\left(\frac{(q_i(t_S-\delta)-q_j(t_S-\delta))^\alpha}{C_1\alpha}-(\delta-\varepsilon)\right)^{\frac{1}{\alpha}}, \quad \varepsilon \in [0,\delta].
\end{align}
Now we claim that
\begin{align}\label{X-2}
	q_i(t_S-\varepsilon)-q_j(t_S-\varepsilon)
	\geq (C_1\alpha\varepsilon)^{\frac{1}{\alpha}}, \quad \varepsilon \in [0,\delta].
\end{align}
Suppose that \eqref{X-2} does not hold. Then for some $\varepsilon^* \in [0,\delta]$, we have
\[
	q_i(t_S-\varepsilon^*)-q_j(t_S-\varepsilon^*)
	< (C_1\alpha\varepsilon^*)^{\frac{1}{\alpha}}.
\]
Then \eqref{X-1} under $\delta=\varepsilon^*$ yields
\[
	q_i(t_S-\varepsilon)-q_j(t_S-\varepsilon)
	< (C_1\alpha\varepsilon)^{\frac{1}{\alpha}}, \quad \varepsilon \in [0,\varepsilon^*].
\]
Then, as the inequality is strict and $(C_1\alpha\varepsilon)^{\frac{1}{\alpha}}|_{\varepsilon=0}=0$, there exists $\varepsilon^{**} \in (0,\varepsilon^*]$ satisfying
\[
	q_i(t_S-\varepsilon^{**})-q_j(t_S-\varepsilon^{**})=0.
\]
However, since $t_S$ is a time that $q_i$ and $q_j$ \emph{starts} to stick, this is awkward, verifying \eqref{X-2}. Since choice of $i$ and $j=i-1$ was arbitrary and $C_1$ is independent of indices, we have
\begin{align*}
	\min_{i \neq j, i,j \in [l]} |q_i(t_S-\varepsilon)-q_j(t-\varepsilon)| 
	\geq (C_1\alpha\varepsilon)^{\frac{1}{\alpha}} =: D_1\varepsilon^{\frac{1}{\alpha}}.
\end{align*}

Now take any $i',j' \in [l]$ with $i'>j'$. Since $\psi(|r|)$ decreasing in $|r|$, we have
\begin{align*}
	\Psi(q_k-q_{j'})-\Psi(q_k-q_{i'})
	= \int_{q_k-q_{i'}}^{q_k-q_{j'}} \psi(r) dr
	\leq \int_{-\frac{q_{i'}-q_{j'}}{2}}^{\frac{q_{i'}-q_{j'}}{2}} \psi(r) dr
	= \frac{2^\alpha}{1-\alpha}(q_{i'}-q_{j'})^{1-\alpha}.
\end{align*}
We apply the mean value theorem to get
 \begin{align*}
	\frac{d}{dt}({q}_{i'}-{q}_{j'})
	&= \frac{\kappa G'(y_{i'\ell})}{N}
	\left(\sum_{k=1}^N \big( \Psi(q_k-q_{i'})-\Psi(q_k-q_{j'}) \big) \right) \\
	&\geq -\frac{\kappa M_{G'}2^\alpha}{1-\alpha}
	(q_{i'}-q_{j'})^{1-\alpha}=:-C_2(q_{i'}-q_{j'})^{1-\alpha}, \quad t \in (t_S-\varepsilon,t_S),
\end{align*}
for a positive constant $C_2>0$ independent of $i'$ and $j'$. We then apply similar technique to derive \eqref{X-2} to yield
\begin{align*}
	q_{i'}(t_S-\varepsilon)-q_{j'}(t_S-\varepsilon)
	\leq (C_2\alpha\varepsilon)^{\frac{1}{\alpha}},
	\quad \varepsilon \in [0,\delta],
\end{align*}
and therefore
\begin{align*}
	\max_{i \neq j, i,j \in [l]} |q_i(t_S-\varepsilon)-q_j(t-\varepsilon)| 
	\leq (C_2\alpha\varepsilon)^{\frac{1}{\alpha}}=:D_2\varepsilon^{\frac{1}{\alpha}}.
\end{align*}

\end{proof}

\begin{lemma}\label{L3.2}
	Suppose that $0 \leq f \in L^1_{\mathrm{loc}}(\bbr)$ and $u$ is absolutely continuous on $[a,b]$. If $(f \circ u) \times u' \in L^1([a,b])$, then
	\[
		\int_{u(a)}^{u(b)}f(x)dx = \int_a^b f(u(t))u'(t)dt.
	\]   
\end{lemma}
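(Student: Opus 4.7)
The plan is to define $F(x) := \int_{0}^{x} f(s)\,ds$, observe that the left-hand side equals $F(u(b)) - F(u(a))$, and reduce the identity to two claims: (i) $F \circ u$ is absolutely continuous on $[a,b]$, and (ii) $(F \circ u)'(t) = f(u(t))\,u'(t)$ for almost every $t$. These together with the fundamental theorem of calculus for absolutely continuous functions will yield the change of variable formula.

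First I would reduce to the case of a bounded integrand by truncation. Set $f_N := \min(f, N)$ with antiderivative $F_N$. Assuming the identity is known for each $f_N$, monotone convergence handles the left-hand side $\int_{u(a)}^{u(b)} f_N \to \int_{u(a)}^{u(b)} f$, while on the right-hand side dominated convergence applies with dominating function $f(u(t))|u'(t)|$, which equals $|f(u(t))\,u'(t)|$ since $f \geq 0$ and is in $L^1([a,b])$ by hypothesis. This reduces the problem to $0 \leq f \leq M$, in which case $F$ is $M$-Lipschitz and $F \circ u$ is automatically AC as the composition of a Lipschitz function with an AC function; claim (i) is then immediate.

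For claim (ii), let $N_0 \subset \bbr$ denote the Lebesgue null set on which $F$ fails to be differentiable, so that $F'(x) = f(x)$ for $x \notin N_0$. At any $t \in [a,b]$ where $u'(t)$ exists and $u(t) \notin N_0$, the classical chain rule supplies $(F \circ u)'(t) = f(u(t))\,u'(t)$. The delicate case is the set $A := \{t : u(t) \in N_0\}$. Here I would invoke the Banach indicatrix identity $\int_{u^{-1}(E)} |u'(t)|\,dt = \int_E N(u,y)\,dy$ for AC functions $u$ and Borel $E$ (a consequence of Lusin's property (N)); applied with $E = N_0$ it forces $u'(t) = 0$ for a.e. $t \in A$. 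At such points the Lipschitz bound on $F$ yields $|F(u(t+h)) - F(u(t))| \leq M\,|u(t+h) - u(t)| = o(|h|)$, so $(F \circ u)'(t) = 0 = f(u(t))\,u'(t)$. Thus claim (ii) holds almost everywhere on $[a,b]$.

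The main obstacle is precisely this last point. Without controlling how $u$ can push $[a,b]$ into the exceptional null set of $F$, the pointwise chain rule could fail on a positive-measure subset of $[a,b]$ and the entire argument would break down. The Banach indicatrix identity is the right tool to circumvent the issue, because it certifies that $u'$ must vanish almost everywhere on $u^{-1}(N_0)$; the Lipschitz bound on $F$ then pins down $(F \circ u)'$ on that remaining set. Once this is in place, the fundamental theorem of calculus applied to the AC function $F \circ u$ and the truncation limit complete the proof routinely.
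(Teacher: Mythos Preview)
Your proof is correct and follows essentially the same route as the paper: truncate to bounded $f$, use that the antiderivative $F$ is then Lipschitz so $F\circ u$ is absolutely continuous, apply the chain rule a.e.\ and the fundamental theorem of calculus, then pass to the limit by dominated convergence. The only difference is that you justify the a.e.\ chain rule carefully via the Banach indicatrix argument on the exceptional set $u^{-1}(N_0)$, whereas the paper simply asserts $(F\circ u)'(t)=f(u(t))u'(t)$ a.e.\ without comment; your extra care is warranted but does not change the strategy.
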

\begin{proof}
	First suppose that $0 \leq f$ is bounded and measurable. For some constant $c$, define
	\[
		F(x) := \int_c^x f(t)dt,
	\]
	Then from boundedness of $f$, we have $F \in C^{0,1}(\bbr)$. Thus $F \circ u$ is absolutely continuous and
	\[
		(F \circ u)'(t) = f'(u(t))u'(t),
	\]
	for almost every $t \in [a,b]$. Therefore we have
	\begin{align*}
		\int_a^b f(u(t))u'(t)dt &= \int_a^b (F \circ u)'(t)dt = (F \circ u)(b) - (F \circ u)(b) \\
		&= F(u(b))-F(u(a)) = \int_{u(a)}^{u(b)}F'(x)dx = \int_{u(a)}^{u(b)}f(x)dx.
	\end{align*}
	Now suppose that $0 \leq f \in L^1_{\mathrm{loc}}(\bbr)$. Define an approximating function $f_n$ as
	\[
		f_n(x):=
		\begin{cases}
			f(x), \quad &\text{if} ~ 0 \leq f(x) \leq n, \\
			0, \quad &\text{if} ~ f(x) > n.
		\end{cases}
	\]
	Then since $f_n$ is bounded, we have 
		\begin{align*}
		\int_a^b f_n(u(t))u'(t)dt = \int_{u(a)}^{u(b)}f_n(x)dx.
	\end{align*}
	From the integrability of $f$ and $f(u(t))|u'(t)|$, we have a desired result from the dominated convergence theorem.
\end{proof}

As a direct consequence, we can establish the equivalence between \eqref{B-2} and \eqref{B-3} whenever $\psi(q_i-q_j)(G(p_i)-G(p_j))$ is locally integrable for each $i,j \in [N]$. More precisely, let $\Psi(\cdot)$ be an antiderivative of $\psi$:
\[
	\Psi(x) := \int_0^x \psi(y) dy,  \quad x \in \bbr,
\]
as long as $\psi$ is locally integrable. Let $(P,Q)$ be a solution to \eqref{B-2}, where $p_i \in W^{2,1}([0,T])$ for any $T>0$. Then it follows that
\begin{align*}
\frac{d}{dt} \Psi(q_k(t) - q_i (t)) &= \frac{d}{dt} \int_{q_k(0) - q_i (t)}^{q_k(t) - q_i (t)} \psi(y) dy \\
&= \frac{d}{dt}\int_0^t \psi(q_k(t) - q_i (t))(G(q_k(t)) - G(q_i(t)))dt && (\because \text{Lemma \ref{L3.2}}) \\
&= \psi(q_k(t) - q_i (t)) (G(p_k(t)) - G(p_i(t))),
\end{align*}
for almost every $t$. Hence, it follows from $\eqref{B-2}$ that
\begin{equation*}
 \frac{d}{dt} \left( p_i - \frac{\kappa}{N}\sum_{k=1}^N   \Psi(q_k(t) - q_i (t)) \right) = 0, \quad i \in [N],
 \end{equation*}
for almost every $t$. Now, we integrate above with respect to $t$ to get 
\begin{align*}
p_i(t) &= p_i^0 -  \frac{\kappa}{N}\sum_{k=1}^N  \Psi(q_k^0 - q_i^0)  + \frac{\kappa}{N}\sum_{k=1}^N  \Psi(q_k(t) - q_i (t)) \\
         &=: \nu_i +  \frac{\kappa}{N}\sum_{k=1}^N  \Psi(q_k(t) - q_i (t)).
\end{align*}
Conversely, if each $q_i$ is continuously differentiable and $\dot{q}_i$ is absolutely continuous in any finite time interval, we recover \eqref{B-2} from \eqref{B-3} for almost every $t$ by direct differentiation. \vspace{.5cm}

Now we are ready to prove Theorem \ref{T3.1}.

\begin{proof}[Proof of Theorem \ref{T3.1}] Let $\mathcal{T}=(t_1,t_2,\cdots,t_c)$ and $t_0=0, t_{c+1}=\infty$, where $t_i$ is increasing with respect to indices. Let $[l]$ be a set of indices sticking at $t_S \in \mathcal{T}$ as in the proof of Lemma \ref{L3.1}. For any $\varepsilon>0, k \in [c]$ and $i,j \in [N]$, we have either
		\begin{align}\label{X-5}
			q_i(t) \equiv q_j(t) ~ \text{for} ~ t \geq t_k, \quad \text{or} \quad \inf_{t \in (t_k+\varepsilon, t_{k+1}-\varepsilon)}|q_i(t)-q_j(t)|>C>0,
		\end{align}
		for some constant $C>0$ from Proposition \ref{P3.2}. Thus $\Psi(q_i(s)-q_j(s))$ is continuously differentiable for $s$ where $|s-t_k| > \varepsilon, ~ t_k \in \mathcal{T}$. Therefore by Lemma \ref{L3.2}, \eqref{B-2} and \eqref{B-3} are equivalent in time $T \in (t_k+\varepsilon, t_{k+1}-\varepsilon)$. Now consider a bounded regular communcation $\tilde{\psi}$ satisfying
		\[
			\psi(x) = \tilde{\psi}(x), \quad x \in {(C,\infty)},
		\]
		and its antiderivative $\tilde{\Psi}(x):=\int_0^x \tilde{\psi}(r)dr$. Let $T \in (t_k+\varepsilon, t_{k+1}-\varepsilon)$. For the former case of \eqref{X-5}, we have
		\[
			\Psi(q_i(T)-q_j(T)) - \Psi(q_i(t_k+\varepsilon)-q_j(t_k+\varepsilon)) = 0 = \tilde{\Psi}(q_i(T)-q_j(T)) - \tilde{\Psi}(q_i(t_k+\varepsilon)-q_j(t_k+\varepsilon)).
		\]
		For the latter case of \eqref{X-5}, since $\psi$ and $\tilde{\psi}$ are same in $(C,\infty)$, we have
		\begin{align*}
			\Psi(q_i(T)-q_j(T)) -& \Psi(q_i(t_k+\varepsilon)-q_j(t_k+\varepsilon)) = \int_{q_i(t_k+\varepsilon)-q_j(t_k+\varepsilon)}^{q_i(T)-q_j(T)} \psi(r) dr \\
			&= \int_{q_i(t_k+\varepsilon)-q_j(t_k+\varepsilon)}^{q_i(T)-q_j(T)} \tilde{\psi}(r) dr
			= \tilde{\Psi}(q_i(T)-q_j(T)) - \tilde{\Psi}(q_i(t_k+\varepsilon)-q_j(t_k+\varepsilon)).
		\end{align*}
		This yields
		\begin{align*}
	p_i(T) &= \nu_i +  \frac{\kappa}{N}\sum_{k=1}^N  \Psi(q_k(T) - q_i (T)) \\
	&= p_i(t_k+\varepsilon) -  \frac{\kappa}{N}\sum_{k=1}^N  \Psi(q_k(t_k+\varepsilon) - q_i(t_k+\varepsilon))  + \frac{\kappa}{N}\sum_{k=1}^N  \Psi(q_k(T) - q_i (T)) \\
	&= p_i(t_k+\varepsilon) -  \frac{\kappa}{N}\sum_{k=1}^N  \tilde{\Psi}(q_k(t_k+\varepsilon) - q_i(t_k+\varepsilon))  + \frac{\kappa}{N}\sum_{k=1}^N  \tilde{\Psi}(q_k(T) - q_i (T)) \\
	&= p_i(t_k+\varepsilon) + \frac{\kappa}{N}\sum_{k=1}^N \int_{t_k+\varepsilon}^T \tilde{\psi}(q_k(s)-q_i(s))(G(p_k(s))-G(p_i(s)))ds.
	\end{align*}
	Therefore, even if we change the kernel of \eqref{B-2} from $\psi$ to $\tilde{\psi}$ at time $t_S+\varepsilon$, $p_i$ still remains as a solution of a differential equation in time $(t_k+\varepsilon, t_{k+1}-\varepsilon)$. Now consider a differential equation
	\[
		\dot{\tilde{q}}_i = G(\tilde{\nu}_i + \frac{\kappa}{N}\sum_{k=1}^N\tilde{\Psi}(\tilde{q}_k(t)-\tilde{q}_i(t))),
		\quad \tilde{\nu}_i := 	\tilde{p}_i^0 - \frac{\kappa}{N}\sum_{k=1}^N(\tilde{\Psi}(\tilde{q}_k^0-\tilde{q}_i^0)),
		\quad \tilde{q}_i^0=\tilde{q}_i(0),
	\]
	where $\tilde{q}_i^0=q_i(t_k+\varepsilon), ~  \tilde{p}_i^0=p_i(t_k+\varepsilon)$.
	Since \eqref{X-5} also holds for $\tilde{q_i}$, by the same argument, we can replace $\tilde{\psi}$ to $\psi$ as well. In other word, for $t \in (t_k+\varepsilon, t_{k+1}-\varepsilon)$, a value of solution is independent of the value of $\psi$ near the origin. Therefore we may assume that $\psi$ is regular and use Lemma \ref{L2.1} for any $t \notin \mathcal{T}$. As a choice of $\varepsilon>0$ is arbitrary, we have
\begin{align*}
	&\frac{d}{dt} \| P \|_{[l]} \leq
	-\frac{\kappa m_{G'}l}{N}\psi(\|Q\|_{[l]})\|P\|_{[l]} + \frac{2\kappa M_{G'}(N-l)P^0_ML_{\psi, [l]}}{N}\|Q\|_{[l]}, \\
	&L_{\psi,[l]}(t) := \sup_{\substack{r,s \geq {q}_{[l]}(t), \\ r \neq s}} \left| \frac{\psi(r)-\psi(s)}{r-s} \right| <\infty,
	\quad {q}_{[l]}(t):= \min_{\substack{i' \in [l] \\ j' \notin [l]}}|q_{i'}(t)-q_{j'}(t)|,
\end{align*}
for $t \notin \mathcal{T}$. As $p_i$ and $p_j$ stick at $t_S$, Lemma \ref{L3.1} yields
			\[
				D_1\varepsilon^{\frac{1}{\alpha}} \leq |q_i(t_S-\varepsilon)-q_j(t_S-\varepsilon)| \leq D_2\varepsilon^{\frac{1}{\alpha}},
			\]
			where
			\[
				D_1 = \left(\frac{2\kappa m_{G'}\alpha}{N(1-\alpha)}\right)^{\frac{1}{\alpha}},
				\quad D_2 = \left( \frac{\kappa M_{G'}2^\alpha \alpha}{1-\alpha} \right)^{\frac{1}{\alpha}}.
			\]
Since $[l]$ is a set of sticking particles, particles $p_i$ and $p_j$ with indices $i \in [l]$ and $j \notin [l]$ are either collisional or separated at time $t_S$. Therefore for $0 < \delta \ll 1$, Lemma \ref{L3.1} yields
\begin{align*}
	&L_{\psi,[l]}(t_S-\delta) \leq \Big|\frac{d}{dx}\frac{1}{x^\alpha}\Big|\Bigg|_{x=C\delta} = {\alpha}C^{-\alpha-1}\delta^{-\alpha-1} \\
	&\| Q(t_S-\delta) \|_{[l]} \leq \sqrt{(l^2-l) \times (D_2\delta^{\frac{1}{\alpha}})^2} \leq lD_2\delta^{\frac{1}{\alpha}}.
\end{align*}
Therefore for $0 < \delta \ll 1$,
\begin{align*}
	\frac{d}{dt} \| P(t_S-\delta) \|_{[l]}
	&\leq -\frac{\kappa m_{G'}l^{1-\alpha}D_2^{-\alpha}}{N}\delta^{-1}\|P(t-\delta)\|_{[l]}
	+ \frac{2\kappa M_{G'}(N-l)P^0_M}{N} C^{-1-\alpha}\delta^{-1-\alpha}l D_2\delta^{\frac{1}{\alpha}} \\
	&=: -K_1\delta^{-1}\|P(t-\delta)\|_{[l]} + K_2 \delta^{-1-\alpha+\frac{1}{\alpha}}.
\end{align*}
	Then for fixed $\varepsilon>0$ and $t_* < t - \varepsilon$, the Gr\"onwall inequality yields
	\begin{align*}
		\|P(t-\varepsilon)\|_{[l]} \leq \exp&\left(-K_1\int_{t^*}^{t_S-\varepsilon} \frac{1}{t_S-s} ds\right) \\
		& \times \bigg[ \|P(t^*)\|_{[l]} + K_2\int_{t^*}^{t_S-\varepsilon} \exp\left( K_1 \int_{t^*}^s \frac{1}{t_S-u} du \right) (t_S-s)^{-1-\alpha+\frac{1}{\alpha}} ds \bigg] \\
		=& \frac{\varepsilon^{K_1}}{(t_s-t^*)^{K_1}}
		\times \bigg[ \|P(t^*)\| + K_2(t_s-t^*)^{K_1}\int_{t^*}^{t_S-\varepsilon}(t_S-s)^{-1-\alpha+\frac{1}{\alpha}-K_1} ds \bigg] \\
		&\lesssim \begin{cases}
			\varepsilon^{K_1}(\log(t_S-t^*)-\log\varepsilon), \quad &\text{if} ~ -\alpha+\frac{1}{\alpha} -K_1 = 0, \\
			\varepsilon^{K_1}((t_s-t^*)^{-\alpha+\frac{1}{\alpha}-K_1} - \varepsilon^{-\alpha+\frac{1}{\alpha}-K_1}), &\text{if} ~ -\alpha+\frac{1}{\alpha} -K_1 \neq 0.
		\end{cases}
	\end{align*}
		Therefore, if $q_i$ and $q_j$ starts to stick at time $t_S$, then for $\varepsilon \ll 1$,
		\begin{align*}
			\psi(q_i-q_j)(G(p_i)-G(p_j))|_{t=t_S-\varepsilon} &\leq M_{G'}\psi((D_1\varepsilon^{\frac{1}{\alpha}}))D_{P,[l]}(t-\varepsilon) \\
			&\leq M_{G'}\psi((D_1\varepsilon^{\frac{1}{\alpha}}))\|P(t-\varepsilon)\|_{[l]} \\
			&\lesssim \begin{cases}
				\varepsilon^{K_1-1}(1-\log\varepsilon), \quad &\text{if} ~ -\alpha+\frac{1}{\alpha} -K_1 = 0, \\
				|\varepsilon^{K_1-1} - \varepsilon^{-1-\alpha+\frac{1}{\alpha}}|, &\text{if} ~ -\alpha+\frac{1}{\alpha} - K_1 \neq 0.
			\end{cases}
		\end{align*}
		As $G(p_i(t))-G(p_j(t)) \equiv 0$ for $t \geq t_S$, we have
		\begin{align*}
			\psi(q_i-q_j)(G(p_i)-G(p_j))|_{[t_S-\varepsilon,t_S+\varepsilon]}
			\in L^p, \quad \text{where} \quad p \in \bigg[1, \frac{1}{\max\{ 0,1-K_1,1+\alpha-1/\alpha\}}\bigg).
		\end{align*}
		Furthermore, if $1+\alpha-1/\alpha \leq 0$ and $1 \leq K_1$, then we can choose $p = \infty$. Note that $K_1$ is increasing in $l$, the number of simultaneously sticking particles, so that
		\[
			K_1 = \frac{\kappa m_{G'}l^{1-\alpha}D_2^{-\alpha}}{N}
			= \frac{m_{G'}l^{1-\alpha}(1-\alpha)}{NM_{G'}2^\alpha\alpha}
			\geq \frac{m_{G'}2^{1-2\alpha}(1-\alpha)}{NM_{G'}\alpha}=:K. 
		\] 

		If $q_i$ and $q_j$ collide at time $t_S$, where $\nu_i > \nu_j$, by similar calculation in \eqref{X-0} we have
		\[
			m_{G'}(\nu_i-\nu_j) \leq G(p_i(t_S))-G(p_j(t_S)) \leq M_{G'}(\nu_i-\nu_j),
		\]
		and $G(p_i(t_S))-G(p_j(t_S))$ is nonzero bounded in a neighborhood of $t_S$. Together with Lemma \ref{L3.1}, this yields
		\[
			\psi(q_i-q_j)(G(p_i)-G(p_j))|_{t=t_S \pm \varepsilon} \lesssim \varepsilon^{-\alpha}.
		\]
		In this case, we have
		\begin{align*}
			\psi(q_i-q_j)(G(p_i)-G(p_j))|_{[t_S-\varepsilon,t_S+\varepsilon]}
			\in L^p, \quad \text{where} \quad p \in \bigg[1, \frac{1}{\alpha}\bigg).
		\end{align*}
		If $p_i$ and $p_j$ are neither collisional nor sticking at $t_S$, then $\psi(q_i-q_j)(G(p_i)-G(p_j))$ is bounded near $t_S$. On the other hand, $\psi(q_i(s)-q_j(s))(G(p_i(s))-G(p_j(s)))$ is bounded for $s$ such that $|s-t_c| > \varepsilon, ~ t_c \in \mathcal{T}$. Putting the results altogether, we conclude
		\[
			\sum_{i=1}^N \psi(q_i-q_j)(G(p_i)-G(p_j)) \in L^p_{\mathrm{loc}}(\bbr_+),
			\quad \text{where} \quad p \in \bigg[1, \frac{1}{\max\{ 1-K, \alpha\}}\bigg).
		\]
		This proves the first assertion of the Theorem \ref{T3.1}. \vspace{.5cm}
		
		To prove the second assertion, we consider the solution to \eqref{B-2} emanating from an initial data $(P(T),Q(T))$, where $T>t_c$. Then from Proposition \ref{P3.2}, we have either
		\begin{align*}
			q_i(t) \equiv q_j(t) ~ \text{for} ~ t > T, \quad \text{or} \quad \inf_{t \geq T}|q_i(t)-q_j(t)|>C>0,
		\end{align*}
		for some constant $C>0$. Therefore, as we did in the beginning of the proof, a value of solution for $t > T$ is independent of the value of $\psi$ near the origin. Therefore we may assume that $\psi$ is regular and apply Theorem \ref{T2.1}. Since $\int_r^\infty \psi(x) dx = \infty$ for any $r>0$, we conclude that flocking emerges unconditionally.
\end{proof}

\section{Analysis under strongly singular communications}\label{sec:4} 
\setcounter{equation}{0}

In this section, we consider strongly singular communications, which are not integrable near the origin. A typical example is
\begin{align}\label{C-1}
	\text{$\displaystyle\psi(q)=\frac{1}{|q|^\alpha}, \quad$ where $\quad \alpha \geq 1$}.
\end{align}
As in the previous section, the well-posedness of a solution is directly related to singularity arising from a collision. For a strongly singular kernel case, this issue can be treated by the so-called `collision avoidance property' of the strongly singular kernel.

\begin{proposition}\label{P4.1}
	Suppose that
	\[
		\psi \in (C^{0,1}_{\mathrm{loc}} \cap L^1_{\mathrm{loc}})(\bbr_+;\bbr_+),
		\quad (\psi(r)-\psi(s))(r-s) \leq 0, \quad \forall r,s \in \bbr_+,
	\]
	and let $Q$ be a solution of \eqref{A-3} with noncollisional initial data  $(P^0, Q^0)$. If $\int_0^\varepsilon \psi(r) dr = \infty$ for any $\varepsilon>0$, then there exists a unique global classical solution with the collision avoidance property:
	\[
		\inf_{t \in [0,T]}\min_{\substack{i,j \in [N] \\ i \neq j}}|q_i(t)-q_j(t)| > 0, \quad \forall T \in \bbr_+.
	\]
Furthermore, if the ambient space is one-dimensional($d=1$), then we have
	\[
		\inf_{t \geq 0}\min_{\substack{i,j \in [N] \\ i \neq j}}|q_i(t)-q_j(t)| > 0.
	\]
\end{proposition}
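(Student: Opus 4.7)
The plan is to combine local Cauchy--Lipschitz theory with a Lyapunov-type argument exploiting the strong singularity $\int_0^\varepsilon \psi(r)\,dr = \infty$. Since $\psi \in C^{0,1}_{\mathrm{loc}}(\bbr_+;\bbr_+)$, the right-hand side of \eqref{A-3} is locally Lipschitz in $(P,Q)$ on the open set $\{q_i \neq q_j \text{ for all } i \neq j\}$, so the non-collisional initial datum generates a unique classical solution on a maximal interval $[0,t^*)$ along which $m(t) := \min_{i \neq j}|q_i(t) - q_j(t)| > 0$. To obtain the first conclusion it suffices to exclude $t^* \leq T < \infty$ for arbitrary $T$, i.e., to rule out the scenario $\liminf_{t \to t^*}m(t) = 0$.

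Suppose for contradiction $t^* < T < \infty$ and pick $t_n \nearrow t^*$ with $m(t_n) \to 0$. By Proposition \ref{P2.1} the velocities satisfy $|p_i(t)| \leq P^0_M$, hence $\|Q(t)\| \leq R := \|Q^0\| + 2 M_{G'} P^0_M T$ on $[0,t^*)$. The core object is the Lyapunov functional
\[
  \mathcal{E}(t) := \tfrac12\|P(t)\|^2 + \frac{C_0}{N}\sum_{k \neq \ell}\int_{|q_k(t) - q_\ell(t)|}^R \psi(r)\,dr,
\]
with $C_0 > 0$ chosen appropriately. The dissipation estimate in the proof of Lemma \ref{L2.1} with $l=N$ gives $\tfrac{d}{dt}\tfrac12\|P\|^2 \leq -\kappa\mathcal{M}\sum_{i,k}\psi_{ik}|p_i-p_k|^2$, while the chain rule produces $-\tfrac{C_0}{N}\sum_{k\neq\ell}\psi_{k\ell}\dot{r}_{k\ell} \leq \tfrac{C_0 M_{G'}}{N}\sum_{k\neq\ell}\psi_{k\ell}|p_k - p_\ell|$ with $r_{k\ell} := |q_k - q_\ell|$. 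Combined with Proposition \ref{P2.1}(4) and a tuned AM--GM step, the singular $\psi_{k\ell}$-contributions will cancel, yielding $\dot{\mathcal{E}} \leq C_1$ with $C_1$ independent of $m$. Integration gives $\mathcal{E}(t_n) \leq \mathcal{E}(0) + C_1 T$, while the strong-singularity assumption forces the potential part of $\mathcal{E}(t_n)$ to diverge, a contradiction. Hence $t^* \geq T$ for every $T$ and the unique classical solution is global and collision-free on every compact interval.

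The hard part will be carrying out the cancellation cleanly: both the dissipation $\psi_{k\ell}|p_k-p_\ell|^2$ and the pumping $\psi_{k\ell}|p_k-p_\ell|$ are singular as $m \to 0$, and a naive AM--GM leaves a residual proportional to $\sum \psi_{k\ell}$ that is itself singular. The trick I would attempt is to rewrite the pumping as $\psi_{k\ell}\,\hat{n}_{k\ell}\cdot(G(p_k) - G(p_\ell))$ with $\hat{n}_{k\ell} := (q_k-q_\ell)/|q_k - q_\ell|$, then use Proposition \ref{P2.1}(4) to control $|G(p_k) - G(p_\ell)|$ in terms of $(p_k - p_\ell)\cdot (G(p_k) - G(p_\ell))$ and pair that factor against $\sqrt{\psi_{k\ell}}|p_k - p_\ell|$ coming from the dissipation by Cauchy--Schwarz, so the genuinely singular factors combine into a non-positive term and the residual depends only on the bounded quantities $P^0_M$, $M_{G'}$, $\mathcal{M}$, and $N$. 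If that fails, a cleaner alternative is to integrate the dissipation directly to obtain $\int_0^{t^*}\sum\psi_{ij}|p_i-p_j|^2\,ds \leq \tfrac{\|P^0\|^2}{2\kappa\mathcal{M}}$, and then control $\int_{t_0}^{t^*}|p_i-p_j|\,ds$ via Cauchy--Schwarz against $\psi_{ij}^{-1}$, which itself vanishes as $r_{ij}\to 0$ by strong singularity, to contradict the requirement $\int_{t_0}^{t^*}|p_i - p_j|\,ds \geq |q_i(t_0) - q_j(t_0)|/M_{G'} > 0$.

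For the one-dimensional strengthening, the first part already gives a global collision-free classical solution, so the ordering $q_1(t) < \cdots < q_N(t)$ is preserved for all $t \geq 0$ and $m(t) = \min_i (q_{i+1}(t) - q_i(t))$. Adapting the first-order reduction of Section \ref{sec:3} to the strongly singular regime, I would fix $r_0 > 0$ and set $\Psi(r) := \int_{r_0}^r \psi(s)\,ds$ on $\bbr_+$, so that $\Psi$ is increasing with $\Psi(r) \to -\infty$ as $r \to 0^+$, and define
\[
  \tilde p_i(t) := p_i(t) - \frac{\kappa}{N}\sum_{k > i}\Psi(q_k(t) - q_i(t)) + \frac{\kappa}{N}\sum_{k < i}\Psi(q_i(t) - q_k(t)).
\]
A direct differentiation using $\Psi'(r) = \psi(r)$ and $\dot q_k - \dot q_i = G(p_k) - G(p_i)$ yields $\dot{\tilde p}_i \equiv 0$, so $\tilde p_i(t) \equiv \mu_i := \tilde p_i(0)$ is conserved. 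If $m(t_n) \to 0$ along some $t_n \to \infty$, let $i_0$ be the leftmost index of a maximal collapsing block; then either $i_0 = 1$ or the left-neighbor gap $q_{i_0} - q_{i_0 - 1}$ stays bounded below, so $\sum_{k < i_0}\Psi(q_{i_0}(t_n) - q_k(t_n))$ remains bounded. In $\sum_{k > i_0}\Psi(q_k(t_n) - q_{i_0}(t_n))$ the term $\Psi(q_{i_0+1}(t_n) - q_{i_0}(t_n))$ tends to $-\infty$ while every other term is bounded above by $\Psi(R)$, so the whole sum diverges to $-\infty$. The conservation law then forces $p_{i_0}(t_n) \to -\infty$, contradicting $|p_{i_0}| \leq P^0_M$; hence $\inf_{t \geq 0} m(t) > 0$.
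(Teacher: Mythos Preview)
Your Part 1 strategy has a real gap: neither of your two routes closes without an additional idea. In the Lyapunov functional $\mathcal{E}$, the pumping term is controlled only by $\frac{C_0 M_{G'}}{N}\sum_{k\neq\ell}\psi_{k\ell}|p_k - p_\ell|$, and no AM--GM or Cauchy--Schwarz pairing against the dissipation $\kappa\mathcal{M}\sum\psi_{k\ell}|p_k-p_\ell|^2$ can absorb this without leaving a residual proportional to $\sum\psi_{k\ell}$, which itself blows up at collision. Your proposed trick via Proposition~\ref{P2.1}(4) only rewrites $|G(p_k)-G(p_\ell)|$ as $\sqrt{(p_k-p_\ell)\cdot(G(p_k)-G(p_\ell))}$ up to constants; after Cauchy--Schwarz a lone factor $(\sum\psi_{k\ell})^{1/2}$ still survives. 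Your alternative via the dissipation bound and Cauchy--Schwarz against $\psi_{ij}^{-1}$ never invokes the \emph{strong} singularity $\int_0^\varepsilon\psi = \infty$ (only $\psi\to\infty$), and indeed both sides of the inequality $|q_i(t_0)-q_j(t_0)|/M_{G'} \leq C\big(\int_{t_0}^{t^*}\psi_{ij}^{-1}\big)^{1/2}$ vanish as $t_0 \to t^*$, so no contradiction results. The missing idea, which the paper supplies, is to restrict the Lyapunov argument to the colliding sub-cluster $[l] := \{j : q_j(\tau)=q_i(\tau)\}$: then the only pumping comes from cross-interaction with $[N]\setminus[l]$, which is \emph{bounded} since non-cluster particles stay a positive distance away near $\tau$. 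Concretely, Lemma~\ref{L2.1} gives $\frac{d}{dt}\|P\|_{[l]} \leq -C_1\psi(\|Q\|_{[l]})\|P\|_{[l]} + C_2$ with $C_2$ finite, and combining with $\big|\frac{d}{dt}\int^{\|Q\|_{[l]}}\psi\big| \leq M_{G'}^{-1}C_1\psi(\|Q\|_{[l]})\|P\|_{[l]}$ produces exact cancellation of the singular terms, leaving a linearly bounded functional whose divergence as $\|Q\|_{[l]}\to 0$ yields the contradiction.

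Your one-dimensional argument, by contrast, takes a genuinely different route from the paper and is essentially correct. The paper recasts the system as a type-symmetric consensus model and invokes the Hendrickx--Tsitsiklis theorem to bound $\int_0^\infty\psi(q_i-q_j)|\dot q_i - \dot q_j|\,ds$ uniformly in $t$; your first-order reduction with the conserved quantities $\tilde p_i$ is more elementary and self-contained, and the contradiction via $|p_{i_0}|\to\infty$ is sound. Two small points need tightening. First, ``the left-neighbour gap stays bounded below'' requires passing to a subsequence along which each of the finitely many gaps either tends to zero or is bounded below by a fixed positive constant. Second, your claim ``every other term is bounded above by $\Psi(R)$'' uses the finite-time diameter bound $R$, which is unavailable for $t_n\to\infty$; you need either $\int_{r_0}^\infty\psi < \infty$ (so $\Psi$ is bounded above on $\bbr_+$) or the observation that $\int_r^\infty\psi = \infty$ for all $r>0$ forces unconditional flocking and hence bounded diameter via Theorem~\ref{T2.1}. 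With these adjustments your $d=1$ argument is complete and arguably cleaner than the paper's appeal to an external result.
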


\begin{proof} Although the Proposition can proved by direct modification of \cite[Theorem 5.2]{BHK2} and \cite[Theorem 3.1]{BHK}, we provide a more simple proof here. Since $Q^0$ is non-collisional, from the standard Cauchy-Lipschitz theory, a solution is well-posed before the first collision time $\tau$ (i.e. the smallest $\tau>0$ satisfying $q_i(\tau)=q_j(\tau)$ for some $i,j,i \neq j$). To establish the global well-posedness, we first observe that the collision does not happen in any finite time. Suppose that the first collision time $\tau \in \bbr_+$ exists and let $q_i$ be a colliding particle. By the rearrangement of indices suppose define a set of indices $[l] \subset [N]$ as
\[
	[l] := \{ j \in [N] \mid q_i(\tau)=q_j(\tau) \} \neq \emptyset.
\]
Then, for any $\varepsilon>0$, we can apply the second estimate in Lemma \ref{L2.1} in time $t \in [0,\tau-\varepsilon)$. From the definition of $[l]$, there exists two positive constants $C_1,C_2>0$ satisfying
 \begin{align}\label{C-2}
		\frac{d}{dt}\| P \|_{[l]} \leq -C_1\psi( \| Q \|_{[l]})\| P \|_{[l]} + C_2\| Q \|_{[l]}, \quad  t \in [0,\tau-\varepsilon).
\end{align}
Now define the functional
\[
\tilde{\mathcal{L}}(t):= \frac{{C_1}}{ M_{G'}} \int_{ \|Q^0\|_{[l]}}^{ \|Q(t)\|_{[l]}}\psi(s)\,ds.
\]
Then $|\tilde{\mathcal{L}}(t)|+\|P(t)\|_{[l]}$ have a linear or sub-linear growth; there exists a positive constant $C$ satisfying
\begin{align*}
\frac{d}{dt}|&\tilde{\mathcal{L}}(t)| + \frac{d}{dt}\|P(t)\|_{[l]}
\leq \left| \frac{d}{dt}\tilde{\mathcal{L}}(t) \right| + \frac{d}{dt}\|P(t)\|_{[l]}\\
&= \left| \frac{{C_1}}{M_{G'}}\psi( \|Q(t)\|_{[l]})\frac{d}{dt}\|Q(t)\|_{[l]}\right| +\frac{d}{dt}\|P(t)\|_{[l]}
\leq C_2\|Q\|_{[l]} < C < \infty,
\end{align*}
where we used \eqref{A-8} and \eqref{C-2} for the second inequality.  Therefore if collision happen, there exists a constant $C$ satisfying
\[
	\infty = \lim_{t \nearrow \tau}|\tilde{\mathcal{L}}(t)| \lesssim C(1+\tau) < \infty,
\]
which yields a contradiction. Therefore collision cannot happen in any finite time, and this proves the existence and uniqueness of a global classical solution. \vspace{.5cm}

Now suppose $d=1$. From \eqref{A-3}, we can deduce an integral equation
\[
	G^{-1}(\dot{q}_i(t))=G^{-1}(\dot{q}_i(0))+\frac{\kappa}{N}\int_{0}^t \sum_{k=1}^N \alpha_{ki}(s)(G^{-1}(\dot{q}_k(s))-G^{-1}(\dot{q}_i(s))) ds, \quad t \in \bbr_+,
\]
where the modified kernel $\alpha_{ki}$ is defined as
\begin{align*}
	\alpha_{ki}(t) =
	\begin{cases}
		0 \quad &\text{if} ~ \dot{q}_k(t)=\dot{q}_i(t), \\
		\displaystyle \phi(q_k(t)-q_i(s))\times \frac{\dot{q}_k(t)-\dot{q}_i(t)}{G^{-1}(\dot{q}_k(t))-G^{-1}(\dot{q}_i(t))} \quad &\text{if} ~ \dot{q}_k(t) \neq \dot{q}_i(t).
	\end{cases}
\end{align*}
so that $\alpha_{ki}$ is nonnegative, measurable and symmetric with respect to indices. Since a finite-in-time collision never happens, a solution is well defined globally, and \cite[Theorem 1]{HT} guarantees the existence the following uniform-in-$t$ bound $U$ for each $i,j \in [N] (i \neq j)$:
\begin{align*}
	\left| \int_{q_i^0-q_j^0}^{q_i(t)-q_j(t)} \psi(r) dr \right|
	&= \left| \int_0^{t} \psi(q_i(s)-q_j(s))(\dot{q}_i(s)-\dot{q}_j(s)) ds \right| \\
	&= \left| \int_{0}^t \alpha_{ij}(s)(G^{-1}(\dot{q}_i(s))-G^{-1}(\dot{q}_j(s))) ds \right| \\
	&\leq \int_{0}^\infty \alpha_{ij}(s)|(G^{-1}(\dot{q}_i(s))-G^{-1}(\dot{q}_j(s))| ds =: U < \infty.
\end{align*}
As $\psi$ is not integrable near the origin, we conclude
	\[
		\sup_{t \geq 0} \max_{\substack{i,j \in [N] \\ i \neq j }} \left| \int_{q_i^0-q_j^0}^{q_i(t)-q_j(t)} \psi(r) dr \right| \leq U
		\quad \Longrightarrow \quad \inf_{t \geq 0} \min_{\substack{i,j \in [N] \\ i \neq j}}|q_i(t)-q_j(t)| > 0.
	\]
\end{proof}

\begin{remark} ~
	\begin{enumerate}
	\item From Proposition \ref{P4.1}, the origin of the kernel is not referred to in any finite time. Therefore, under the same assumption in Proposition \ref{P4.1}, although $\psi$ is not Lipschitz, the results of Theorem \ref{T2.1} still hold.
	\item Although the proof of Proposition \ref{P4.1} is rather simple, explicit lower bounds between agents cannot be deduced. For the explicit expression for a lower bound, refer to the proof of \cite[Theorem 5.2]{BHK2}. 
	\item If the kernel is weakly singular at the origin (i.e., $\int_0^\varepsilon \psi(r) dr < \infty$ for some $\varepsilon>0$), then a collision might happen, as described in the previous section. 
	\end{enumerate}
\end{remark}

For the Euclidean space of arbitrary dimension, the authors of \cite{YYC} derived an existence of strict positive lower bound for relative distances under $\alpha > 2$ and $G=\mathrm{Id}$:
\[
	\min_{i,j \in [N]}|q_i^0-q_j^0| > 0
	\quad \Rightarrow \quad 
	\inf_{t \geq 0}\min_{i,j \in [N]}|q_i(t)-q_j(t)| \geq L_\infty > 0,
\]
 by employing a suitable potential energy with a dissipative structure. Unfortunately, the dissipation of potential energy heavily depends on the Galilean invariance, which \eqref{A-3} lacks due to the presence of a velocity control function. Instead, we provide an alternative characterization for the existence of $L_\infty$.

\begin{theorem}\label{T4.1}
	Let $(P,Q)$ be a solution of \eqref{A-3} with a kernel of the form \eqref{C-1}. Suppose that $Q$ is non-collisional and further assume that
	\begin{enumerate}
		\item $\alpha \neq 1$, and
		\item $(P,Q)$ exhibits flocking.
	\end{enumerate}
	Then there exists a strictly positive lower bound of distance between agents:
	\[
		\inf_{t \geq 0}\min_{\substack{i,j \in [N] \\ i \neq j}}|q_i(t)-q_j(t)| > 0.
	\]
\end{theorem}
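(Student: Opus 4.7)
The strategy is a proof by contradiction using a cluster Lyapunov functional in the spirit of Theorem~\ref{T2.1}. First, flocking combined with Theorem~\ref{T2.1} yields exponential decay $D_P(t) \leq B e^{-Ct}$ with $B, C > 0$ and a uniform bound $\sup_t \|Q(t)\| \leq Q_M < \infty$. For any subset $S \subset [N]$ (Lemma~\ref{L2.1} extends to arbitrary $S$ by relabelling), the estimates $\|P\|_S \leq \|P\|$ and $|\tfrac{d}{dt}\|Q\|_S| \leq M_{G'}\|P\|_S$ (via~\eqref{A-8}) imply that $\|Q(t)\|_S$ converges to some $\|Q\|_S^\infty \geq 0$ with $|\|Q\|_S(t) - \|Q\|_S^\infty| \lesssim e^{-Ct}$; in particular each $r_{ij}(t) := |q_i(t) - q_j(t)|$ converges to some limit $r_{ij}^\infty \geq 0$.

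Suppose toward contradiction that $\inf_{t \geq 0} \min_{i \neq j} r_{ij}(t) = 0$. Proposition~\ref{P4.1} rules out finite-time collision, so necessarily $r_{ij}^\infty = 0$ for some pair. Introduce the equivalence $i \sim j \Longleftrightarrow r_{ij}^\infty = 0$, let $S_0$ be the equivalence class containing that pair (so $|S_0| \geq 2$ and $\|Q\|_{S_0}^\infty = 0$), and set $q_{S_0}(t) := \min_{i' \in S_0,\, j' \notin S_0} r_{i'j'}(t)$. By the complementary property of the class, $\lim_{t \to \infty} q_{S_0}(t) > 0$, and combining this with the local collision avoidance of Proposition~\ref{P4.1} on any compact initial interval yields the \emph{uniform} lower bound $q_{S_0}(t) \geq \delta > 0$ for all $t \geq 0$, hence $L_{\psi, S_0}(t) \leq \alpha\delta^{-\alpha - 1}$.

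Now define
\[
\mathcal{L}_{S_0}(t) := \|P(t)\|_{S_0} + \frac{\kappa\mathcal{M}|S_0|}{NM_{G'}}\int_{\|Q(t)\|_{S_0}}^{+\infty}\psi(s)\,ds.
\]
The integral equals $r^{1-\alpha}/(\alpha-1)$, which is finite for each $r > 0$ precisely because $\alpha > 1$ (this is where the hypothesis $\alpha \neq 1$ is used), and diverges to $+\infty$ as $r \searrow 0$. Applying Lemma~\ref{L2.1}(1) to $S_0$ and combining with $|\tfrac{d}{dt}\|Q\|_{S_0}| \leq M_{G'}\|P\|_{S_0}$, the damping $-\tfrac{\kappa\mathcal{M}|S_0|}{N}\psi(\|Q\|_{S_0})\|P\|_{S_0}$ in $\tfrac{d}{dt}\|P\|_{S_0}$ exactly cancels the upper bound $\tfrac{\kappa\mathcal{M}|S_0|}{N}\psi(\|Q\|_{S_0})\|P\|_{S_0}$ obtained from differentiating the integral, yielding
\[
\frac{d}{dt}\mathcal{L}_{S_0}(t) \leq \frac{2\kappa M_{G'}(N - |S_0|) P^0_M L_{\psi, S_0}(t)}{N}\|Q\|_{S_0}(t).
\]
Under the contradiction hypothesis $\|Q\|_{S_0}^\infty = 0$, the first paragraph gives $\|Q\|_{S_0}(t) \lesssim e^{-Ct}$ uniformly, and together with the bound on $L_{\psi, S_0}$ the right-hand side is $\lesssim e^{-Ct}$, hence integrable on $[0,\infty)$. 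Therefore $\mathcal{L}_{S_0}$ is uniformly bounded, forcing $\int_{\|Q(t)\|_{S_0}}^{+\infty}\psi\,ds$ to be bounded and, in turn, $\|Q\|_{S_0}(t) \geq \eta > 0$ uniformly in $t$, contradicting $\|Q\|_{S_0}(t) \to 0$.

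The main obstacle is establishing the uniform-in-$t$ lower bound $q_{S_0}(t) \geq \delta > 0$ rather than only a limiting lower bound: the positive limit furnished by the equivalence class only controls large $t$, and one must patch it with Proposition~\ref{P4.1} applied on a compact initial interval. A secondary delicate point is the exact cancellation driving the Lyapunov inequality, which mirrors the argument used in Theorem~\ref{T2.1} and pins down the correct prefactor $\kappa\mathcal{M}|S_0|/(NM_{G'})$ in front of the integral. Note that the case $|S_0| = N$ is automatic, since then the error term vanishes and $\mathcal{L}_{S_0}$ is non-increasing.
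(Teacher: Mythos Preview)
Your proof is correct and follows essentially the same route as the paper: exploit flocking to get exponential decay of $D_P$ and convergence of all $r_{ij}(t)$, isolate the equivalence class $S_0$ of asymptotically coalescing particles, secure a uniform lower bound on the inter-cluster distance, and derive a contradiction by comparing a bounded Lyapunov-type quantity against the divergence of $\int^{\bullet}\psi$ as $\|Q\|_{S_0}\to 0$. The packaging differs only in inessential ways: the paper uses Lemma~\ref{L2.1}(2) (bounding the cross term by $\max_{i'\in S_0,\,j'\notin S_0}\psi_{i'j'}$) and the functional $\int_{\|Q^0\|_{S_0}}^{\|Q(t)\|_{S_0}}\psi$, obtaining only linear growth of $|\tilde{\mathcal{L}}|$ which it then contrasts with the exponential blow-up $e^{tC(\alpha-1)}$; you use Lemma~\ref{L2.1}(1) together with the exponential decay of $\|Q\|_{S_0}$ to make the error term \emph{integrable}, so that your $\mathcal{L}_{S_0}$ is outright bounded. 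Either variant works and the underlying mechanism is identical.
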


\begin{proof}
	From Proposition \ref{P4.1}, the result of Theorem \ref{T2.1} holds for kernel of the form \eqref{C-1} as well. Therefore there exists a constant $C>0$ satisfying
	\[
		\max_{i,j}|p_i(t)-p_j(t)| \lesssim e^{-tC}, \quad t \in \bbr_+,
	\]
	and the limit $\lim_{t \to \infty}|q_i(t)-q_j(t)|$ always exists for any indices. In particular if $\lim_{t \to \infty}|q_i(t)-q_j(t)| = 0$, then we have
	\begin{align}\begin{aligned}\label{C-3}
		|q_i(t)-q_j(t)| &= \left| \int_\infty^t (\dot{q}_i(t)-\dot{q}_j(t)) dt \right|
		= \left| \int^\infty_t (\dot{q}_j(t)-\dot{q}_i(t)) dt \right| \\
		&\leq M_{G'}\int^\infty_t | p_i(s) - p_j(s) |ds
		 \lesssim \int^\infty_t e^{-sC} ds \lesssim e^{-tC}.
	\end{aligned}\end{align}
	Now for some index $i$, suppose that there exists a set $[l] \subset [N]$ defined as
	\[
		[l] := \{ j \in [N] \mid \lim_{t \to \infty} (q_i(t)-q_j(t))=0 \} \neq \emptyset.
	\]
	From \eqref{C-3}, there exist positive constants $B,C>0$ satisfying $\|Q(t)\|_{[l]} \leq Be^{-tC}$. Therefore for sufficiently large $t \gg 1$, we obtain
	\begin{align*}
	\left|\int_{\|Q^0\|_{[l]}}^{\|Q(t)\|_{[l]}} \psi(s) ds\right|
	= \int^{\|Q^0\|_{[l]}}_{\|Q(t)\|_{[l]}} \psi(s) ds \geq \int^{\|Q^0\|_{[l]}}_{Be^{-tC}} \psi(s) ds
	= \frac{B^{1-\alpha}e^{tC(\alpha-1)}-\|Q^0\|_{[l]}^{1-\alpha}}{\alpha-1}.
	\end{align*}
	Since the limit of $|p_i(t)-p_j(t)|$ always exists and finite-in-time collision never happens, there exists a positive constant $U$ satisfying
	\[
		\sup_{t \geq 0}\sup_{\substack{i \in [l] \\ j \notin [l]}}\psi(q_i-q_j) < U < \infty.
	\]
	Therefore $|\tilde{\mathcal{L}}|$ defined in the proof of Proposition \ref{P4.1} have a linear or sub-linear growth, which leads to the contradiction:
	\begin{align*}
	e^{tC(\alpha-1)}-1 \lesssim \left|\int_{\|Q^0\|_{[l]}}^{\|Q(t)\|_{[l]}} \psi(s) ds\right| \lesssim 1+t.
	\end{align*}
	Therefore we conclude $[l]=\emptyset$, as desired.
\end{proof}

\begin{remark}\label{R4.2}
	\begin{enumerate}
	\item Let $(P,Q)$ be a solution to \eqref{A-3} with non-collisional initial data $(P^0,Q^0)$. Corollary \ref{C2.1} states a high value of $\kappa$ leads to not only flocking but also the strict spacing between the agents; 
	\begin{align}\label{C-4}
		\min_{\substack{i,j \in [N] \\ i \neq j}}\inf_{t \geq 0}|q_i(t)-q_j(t)| > 0
	\end{align}
	Moreover, when communication is of form $\psi(x)=|x|^{-\alpha}$, the theorem in \cite{YYC} states that \eqref{C-4} can be achieved for arbitrary $\kappa>0$ under $\alpha > 2$ and $G=\mathrm{Id}$. However, to the author's knowledge, a further result to have \eqref{C-4} under $\alpha \in [1,2]$ is missing. Meanwhile, Theorem \ref{T2.1} and \ref{T4.1} states that $\kappa$ can be arbitrarily small when $\alpha$ is close to 1. Therefore Theorem 4.1 may complement the previous result.
	\item If a strictly positive lower bound of the relative state is guaranteed, as we did in the proof of Theorem \ref{T3.1}, we may regularize the kernel. As an application, for example, we may apply results of stability estimates in \cite{HKZ} for singular kernels as well, even though proof of the theorem requires the Lipschitz continuity of the kernel. On the other hand, we may relax a priori condition for stability estimate in \cite{A}, since some of the conditions are devoted to ensuring strict lower bound between relative states.
	\item In many cases concerning a many-body system equipped with a singular kernel, it is often desirable to guarantee a strictly positive lower bound for a relative distance between agents. This, for example, guarantees the well-definedness of a $\omega$-limit set, which enables us to apply the dynamical system theory like LaSalle's invariance principle.
	\end{enumerate}
\end{remark}

\section{conclusion}\label{sec:5}

In this paper, we have presented the CS consensus model with a velocity control function. We provided several conditions to have the emergence of mono or bi-cluster flocking. When communication is singular, the model exhibits some interesting properties not found in regular communication, like sticking or collision avoidance, depending on the integrability of the kernel near the origin. If communication is weakly singular, the particles can stick or collide in finite time, which leads to the loss of regularity. We have studied the existence, uniqueness, and regularity of such solutions on the real line. On the other hand, when a communication is strongly singular, the particles never collide in a finite time whenever initial data is non-collisional. However, the existence of a strictly lower bound for relative distance for the general initial data was left as a remaining issue. We proved that a prototypical communication with strong singularity $(\psi(|q|)=|q|^{-\alpha}, ~ \alpha \geq 1)$ leads to a strictly positive lower bound between agents, provided that singularity is not critical($\alpha \neq 1$) and flocking is guaranteed. Several interesting problems remain as future perspectives. For example, relaxing a priori conditions for strict spacing between the agents is one of the remaining problems. On the other hand, our study of the sticking solution was limited to the real line, and it would be interesting to extend the result to a more general space. 
	

\end{document}